\newcommand{\diffeo}{\cong}
\newcommand{\tri}{\mathcal{T}}
\newcommand{\trib}{\mathcal{B}}
\newcommand{\lef}{\mathcal{F}}
\theoremstyle{plain}
\newtheorem{theorem}{Theorem}
\newtheorem{lemma}{Lemma}
\theoremstyle{definition}
\newtheorem{definition}{Definition}
\newtheorem{example}{Example}
\theoremstyle{remark}
\newtheorem{question}{Question}
\newtheorem{remark}{Remark}
\numberwithin{equation}{section}
\title{Trisecting Smooth $4$--dimensional Cobordisms}
\author{Nickolas A. Castro}
\begin{document}

\begin{abstract}
We extend the theory of relative trisections of smooth, compact, oriented $4$--manifolds with connected boundary given by Gay and Kirby in \cite{gay} to include $4$--manifolds with an arbitrary number of boundary components. Additionally, we provide sufficient conditions under which relatively trisected $4$--manifolds can be glued to one another along diffeomorphic boundary components so as to induce a trisected manifold. These two results allow us to define a category \textbf{Tri} whose objects are smooth, closed, oriented $3$--manifolds equipped with open book decompositions, and morphisms are relatively trisected cobordisms. Additionally, we extend the Hopf stabilization of open book decompositions to a relative stabilization of relative trisections.
\end{abstract}
\maketitle
\begin{section}{Introduction}\label{S:intro}
A trisection of a smooth, compact, connected, oriented $4$--manifold $X$ is a decomposition $X = X_1 \cup X_2 \cup X_3$ into three diffeomorphic $4$--dimensional $1$--handlebodies ($X_i \cong \natural^k S^1 \times B^3$) with certain nice intersection properties. Trisections are the natural $4$--dimensional analog of Heegaard splittings of $3$--manifolds, and there are striking similarities between the two theories. Gay and Kirby first introduced trisections for compact manifolds with connected boundary in ~\cite{gay} and showed that all such manifolds admit a trisection. In the closed case, $\partial X_i \cong \#^k S^1 \times S^2$ is given a genus $g$ Heegaard splitting $(X_i \cap X_{i+1}) \cup (X_i \cap X_{i-1}).$ Such a $(g,k)$--trisection can be given to every closed $4$--manifolds for some $g$ and $k.$ The case when $\partial X \neq \emptyset$ is much more involved; a portion of each $\partial X_i$ must be glued to the other pieces and what remains contributes to $\partial X.$ The key feature of trisections relative to a non-empty boundary is the fact that they induce open book decompositions on the bounding $3$--manifold(s). This fact, first proved in the case of connected boundary by Gay and Kirby, has given a great deal of insight to the theory.

The main results of this paper are: (1) the existence and uniqueness (rel. boundary) of trisections of smooth $4$--manifolds with $m >1$ boundary components; (2) the gluing theorem for relative trisections; and (3) the existence of trisection stabilizations relative to a non-empty boundary component. The first result extends the definition given in \cite{gay} to ensure that relative trisections induce open book decompositions on each boundary component of $X.$ It is this induced boundary structure which gives rise to the gluing theorem, allowing us to glue relative trisections along boundary components with compatible induced open books. Relative stabilizations allow us to obtain new relative trisections from old ones in a way that modifies the induced open book decomposition of a chosen boundary component via a Hopf stabilization. This was inspired by a modification of Lefschetz fibrations which also stabilizes the open book on the boundary (see \cite{gompf}, \cite{osb}). We now state the main results.
\begin{theorem}[Existence and Uniqueness]\label{T:existence}
Every smooth, compact, connected, oriented $4$--manifold $X$  admits a trisection $\tri_X$ relative to its boundary. Additionally, $\tri_X$ induces an open book decomposition on each of the components of $\partial X.$ Moreover, given any collection of open book decompositions on $\partial X,$ there exists a relative trisection of $X$ which induces the given open books. This relative trisection is unique up to (interior) stabilization.
\end{theorem}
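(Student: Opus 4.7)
The plan is to extend the Gay-Kirby construction of \cite{gay} so that it produces a relative trisection compatible with an arbitrary number $m$ of boundary components. The unifying geometric object is a Morse $2$-function $g\colon X \to D^2$ whose critical locus, together with its values on the boundary, encodes both a decomposition $X = X_1 \cup X_2 \cup X_3$ and an open book on each component of $\partial X$; pulling back three sectors of $D^2$ produces the pieces $X_i$, while the radial projection on a collar of $\partial X$ yields the open books.

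\textbf{Existence.} I would pick pairwise disjoint collar neighborhoods $\nu_j \cong \partial_j X \times [0,1)$ of the boundary components and prescribe on each $\nu_j$ a local model for $g$ of the form (open book projection)$\,\times\,$(radial coordinate), with indefinite fold arcs positioned to cut the collared copy of $\partial_j X$ into three handlebody-with-boundary pieces of the correct genus. In the interior I would construct $g$ from a handle decomposition of $X$ built relative to $\partial X$: following Gay-Kirby, rearrange critical points by index so that indices $0,1$ give $X_1$, index $2$ gives $X_2$, and indices $3,4$ give $X_3$. Finally glue the interior function to the boundary collar models; this is done in \cite{gay} for a single boundary, and since the collars are disjoint the same local construction can be carried out independently at each of the $m$ components.

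\textbf{Realizing prescribed open books.} From the relative trisection $\tri$ just produced, which induces some collection $(\mathit{OB}_j')$ on the boundary, I would invoke the relative stabilization operation developed elsewhere in the paper to perform Hopf stabilizations of the induced open book on individual boundary components. By the common-stabilization principle for open books on a fixed closed oriented $3$-manifold (Giroux), each given $\mathit{OB}_j$ and the induced $\mathit{OB}_j'$ share a common Hopf stabilization; applying relative stabilizations independently to each component then yields a relative trisection inducing exactly the prescribed collection.

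\textbf{Uniqueness.} Given two relative trisections $\tri, \tri'$ inducing the same open books on $\partial X$, convert each into a Morse $2$-function and arrange, by an isotopy supported in a collar, that they agree on a neighborhood of $\partial X$. Interpolate via a generic one-parameter family $g_t$ fixed near $\partial X$, and apply the classification of codimension-$1$ singularities of such families: each elementary move is either an ambient isotopy of the trisection or an interior stabilization, and the boundary data remain fixed throughout.

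The main technical obstacle I anticipate is the simultaneous boundary matching in the existence step. Gay-Kirby's ``boundary straightening'' was carried out in a neighborhood of a single boundary component, so one must verify that performing it in each collar $\nu_j$ does not impose conflicting global constraints, and that any genus adjustments required to match the interior pieces to the boundary collar models can be effected by interior stabilizations without disturbing the already-matched collars. A parallel care is needed in the uniqueness argument to ensure the interpolating family $g_t$ can always be kept fixed on the union of collars even as singularities cross between sectors in the interior.
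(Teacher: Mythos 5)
Your handle-decomposition skeleton for the existence step is essentially the paper's: build a Morse function $f\colon X \to I$ from the boundary data with $h_1 = h_3$, take $X_1 = f^{-1}([0,a])$ (indices $0,1$), $X_2$ a collar of $Y^+$ plus the $2$--handles, and $X_3 = \overline{X \setminus (X_1 \cup X_2)}$ (``standing on your head''). What you have not addressed is the paper's central technical step: the attaching link $L$ of the $2$--handles must be made to project onto the Heegaard surface $Y^+ \cap Y^-$ so that each component is embedded with the correct page framing, which the paper achieves by stabilizing the generalized Heegaard splitting $Y^+\cup Y^-$ at each crossing of $L$ (adding a kink and stabilizing again to fix framings). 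This is exactly what guarantees that the $2$--handles come in cancelling pairs with $1$--handles of $X_2$, so that $X_2 \cong \natural^k S^1\times B^3$ and $X_2 \cap X_3 \cong Y^+$. Your remark about ``genus adjustments by interior stabilizations'' gestures at this but leaves the argument incomplete; without it, the verification that the three pieces and their pairwise intersections have the required topology does not go through.

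The ``Realizing prescribed open books'' paragraph has a genuine logical flaw and is, in the paper's scheme, unnecessary. Giroux--Goodman gives a \emph{common} Hopf stabilization $\widetilde{\mathit{OB}}_j$ of your target $\mathit{OB}_j$ and the produced $\mathit{OB}_j'$. Relative stabilizations of the trisection move the induced open book \emph{up} the stabilization order: from $\mathit{OB}_j'$ you can reach $\widetilde{\mathit{OB}}_j$, but you would then have to relatively \emph{destabilize} to reach $\mathit{OB}_j$ itself, and no such inverse operation on relative trisections is available. So this route only produces a trisection inducing a stabilization of the prescribed open book, not the prescribed open book. The paper sidesteps this entirely by taking the prescribed open book $(B,\phi)$ as the starting datum: $\phi\colon \partial X \setminus \nu B \to S^1$ is extended over $\nu B$, composed with $D^2 \cong I\times I \to I$, and then extended to a Morse function on all of $X$, so the induced open book is the prescribed one by construction. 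You should adopt that ordering and delete the stabilization step. Once that is done, your collar-model description is compatible with the paper's argument, and for uniqueness the paper simply invokes Gay--Kirby's relative uniqueness theorem rather than re-running the Morse $2$--function interpolation that you sketch.
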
 

\begin{theorem}[Gluing Theorem]\label{T:Gluing}
Let $X$ and $W$ be smooth, compact, connected oriented $4$--manifolds with non-empty boundary equipped with relative trisections $\tri_X$ and $\tri_W$ respectively. Let $B_X \subset \partial X$ be any collection of boundary components of $X$ and $f:B_X \hookrightarrow \partial W$ an injective, smooth map which respects the induced open book decompositions $\tri_X|_{B_X}$ and $\tri_W|_{f(B_Z)}.$ Then $f$ induces a trisection $\tri = \tri_X \underset{f}{\cup} \tri_W$ on $X \underset{f}{\cup} W.$
\end{theorem}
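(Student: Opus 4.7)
The plan is to define the three pieces of the candidate trisection on $X \underset{f}{\cup} W$ sector by sector: set
\[ \tri_i := X_i \underset{f}{\cup} W_i, \qquad i = 1, 2, 3, \]
where we identify $X_i \cap B_X$ with $W_i \cap f(B_X)$ via the restriction of $f$. This sector-wise gluing is well defined precisely because $f$ respects the induced open book decompositions: in the standard local model near a boundary component, each sector $X_i$ meets the boundary in a region determined entirely by a page of the open book together with the $i$-th third of the angular coordinate, so any diffeomorphism preserving the open book structure matches corresponding sectors.

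The first step is to invoke the standard local model of a relative trisection near a boundary component, as developed in~\cite{gay} and guaranteed on each component by Theorem~\ref{T:existence}. In this model a collar of a boundary component $Y$ with open book $(\Sigma_Y, \phi_Y)$ has the mapping-torus form with three sectors corresponding to angular thirds, together with a collar of the binding in the triple intersection. The hypothesis on $f$ implies that $f$ carries $X_i \cap B_X$ diffeomorphically onto $W_i \cap f(B_X)$ as submanifolds of $\partial X_i$ and $\partial W_i$, so each $\tri_i$ is a smooth compact $4$-manifold after standard corner smoothing.

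The central technical step is to verify that each $\tri_i$ is a $4$-dimensional $1$-handlebody. I would take handle decompositions of $X_i$ and $W_i$ relative to their boundary pieces in $B_X$ and $f(B_X)$ consisting only of $0$- and $1$-handles, which exist because each piece is a $1$-handlebody in the relative sense. Exploiting the product (page $\times$ interval) structure of the gluing region supplied by the local model, one constructs a Morse function on $X_i$ with no critical points in a collar of the gluing region and matches it with a corresponding Morse function on $W_i$; the resulting Morse function on $\tri_i$ has only index-$0$ and index-$1$ critical points, so $\tri_i \diffeo \natural^{k_i'} S^1 \times B^3$ for some $k_i'$. A parallel argument shows that the pairwise intersection $\tri_i \cap \tri_{i+1} = (X_i \cap X_{i+1}) \underset{f}{\cup} (W_i \cap W_{i+1})$ is the union of two $3$-dimensional handlebodies along a page, hence again a $3$-dimensional handlebody, providing the required Heegaard-type splittings. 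The central surface of $\tri$ is obtained by gluing the two original central surfaces along their shared binding components.

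Since the trisection structure on unglued parts is unaffected, the induced open book decompositions on $\partial X \setminus B_X$ and $\partial W \setminus f(B_X)$ assemble into the induced open book structure on $\partial(X \underset{f}{\cup} W)$. The main obstacle is the $1$-handlebody verification in the previous paragraph: while the local product model strongly suggests that the gluing cannot introduce handles of index $\geq 2$, making this rigorous requires carefully aligning the relative Morse functions in a collar of the gluing region, which in turn requires controlling the ambiguity in the local model (uniqueness of the sector model up to isotopy). The accompanying bookkeeping will compute the new trisection parameters $(g', k_1', k_2', k_3')$ in terms of the original data and the open book parameters (page genus and number of binding components of each glued boundary).
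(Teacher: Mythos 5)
Your sector-wise definition $\tri_i := X_i \underset{f}{\cup} W_i$ is exactly the decomposition the paper uses, and the observation that the open-book hypothesis forces $f$ to carry $\partial_{Out}X_i$ to $\partial_{Out}W_i$ is correct. However there is a genuine gap concentrated in the sentence asserting that $\tri_i \cap \tri_{i+1} = (X_i\cap X_{i+1}) \underset{f}{\cup} (W_i\cap W_{i+1})$ ``is the union of two $3$--dimensional handlebodies along a page, hence again a $3$--dimensional handlebody.'' That implication is false as stated: gluing two $3$--dimensional handlebodies along a compact subsurface of their boundaries need not yield a handlebody. What makes it true here is the compression-body structure of $X_i\cap X_{i+1}\cong Y^+$ together with the fact that the identification is an $I$--reversing gluing of the thickened pages $P_j\times I$ and $Q_j\times I$ along the binding, and that is precisely what has to be proved.

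The paper's argument sidesteps the Morse-function alignment you flag at the end. Since $X_i\cong C\times I$ and $W_i\cong B\times I$ for $3$--dimensional compression bodies $C$, $B$, and since $f$ respects this thickening, everything reduces to showing $A:=C\cup_f B$ is a $3$--dimensional handlebody and then noting $\tri_i\cong A\times I$. The core of that $3$--dimensional check is the ``sidefold'' lemma (Lemma~\ref{L:sidefold}): $F_{g,b}\times I$ modulo $(x,t)\sim(x,1-t)$ for $x\in\partial F_{g,b}$ is again diffeomorphic to $F_{g,b}\times I$, which yields $(P_j\times I)\underset{f}{\cup}(Q_j\times I)\cong\natural^{l_j}S^1\times D^2$; the remaining $1$--handles of $C$ and $B$ are then attached in the interior and the genus count is elementary. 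If you prefer to keep the Morse-theoretic framing, be aware that the ``relative handle decomposition of $W_i$ with only $0$-- and $1$--handles rel the gluing region'' you invoke is not automatic from $W_i\cong\natural^k S^1\times B^3$; it is exactly the content of $W_i\cong B\times I$ with $B$ built upside-down from $Q\times I$ by attaching $1$--handles, at which point you have essentially reproduced the paper's reduction.
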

An outline of the paper is a follows: Section~\ref{S:Background} provides a brief discussion of the preliminaries; trisections of closed $4$--manifolds, open book decompositions and Lefschetz fibrations. Sections~\ref{S:basicdefs} and \ref{S:gluingtheorem} prove Theorems~\ref{T:existence} and \ref{T:Gluing} respectively. In section~\ref{S:relstab} we discuss stabilizing trisections relative to a chosen boundary component; this is the only section which requires knowledge of Lefschetz fibrations. We conclude with section~\ref{S:Conclusion} wherein we make a few remarks on the theory.
\end{section}

\begin{section}{Trisections of Closed $4$--manifolds, Open Book Decompositions, and Lefschetz Fibrations\label{S:Background}}
Here we will briefly discuss the preliminaries. Trisections of closed $4$--manifolds are much easier to define than trisections relative to a non-empty boundary. However, the intuition lends itself quite nicely to the relative case.
\begin{definition}\cite{gay}
A \emph{$(g,k)$--trisection } of a closed $4$--manifold $X$ is a decomposition $X = X_1 \cup X_2 \cup X_3$ such that for each $i$
	\begin{enumerate}[i)]
		\item $X_i$ is diffeomorphic to $\natural^k S^1 \times B^3,$
		\item $(X_i \cap X_{i+1}) \cup (X_i \cap X_{i-1})$ is a genus $g$ Heegaard splitting of $\partial X_i,$
	\end{enumerate}
where indices are taken mod $3$.
\end{definition}
As a consequence, the triple intersection $X_1 \cap X_2 \cap X_3 = F_g$ is a genus $g$ surface, called the \emph{trisection surface.} Additionally, a handle decomposition of $X$ tells us that $\chi(X)= 2+ g -3k.$ This tells us two things. The first is that for any given manifold $X$, $k$ is determined by $g$ which allows us to refer to a $(g,k)$-trisection as a \emph{genus $g$ trisection.} The second fact is that the genera of any two trisections of a fixed $X$ must be equivalent mod 3. We will occasionally denote a trisection of $X$ by $\tri_X$, or $\tri.$

\begin{figure}[ht!]
\centering{
	\labellist
		\pinlabel $X$ at 190 380
		\pinlabel \resizebox{10pt}{!}{$X_1$} at 58 361
		\pinlabel \resizebox{10pt}{!}{$X_2$} at 96 369
		\pinlabel \resizebox{10pt}{!}{$X_3$} at 99 350
		\pinlabel \resizebox{10pt}{!}{$X'_1$} [r] at 40 164
		
		\pinlabel \resizebox{6pt}{!}{$\alpha$} at 118 297
		\pinlabel \resizebox{10pt}{!}{$N_\alpha$} at 74 135
	\endlabellist
\includegraphics[scale=.6]{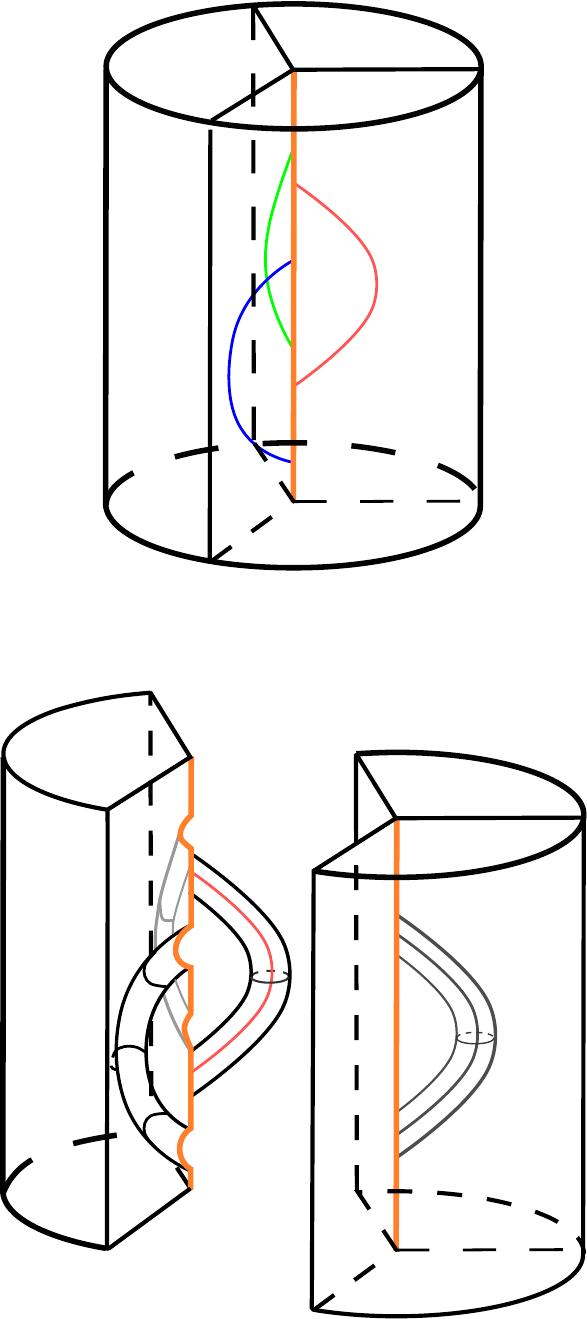}
}
	\label{F:stabilizingcylinder}
	\caption{Stabilizing a Trisection}
\end{figure}

Stabilizing a trisection is a bit more complex than stabilizing a Heegaard splitting. However, we still obtain a new trisection $\tri'$ by modifying $\tri$ in the most trivial way possible. Choose a boundary parallel, properly embedded arc $\alpha \subset X_2 \cap X_3$ and a regular neighborhood $N_1 \subset X_2 \cap X_3$ of $\alpha.$ Choose arcs $\beta, \gamma$ and their neighborhoods $N_2 \subset X_1 \cap X_3$ and $N_3 \subset X_1 \cap X_2$ similarly. We define the pieces of our new trisection to be
	\begin{align*}
		X'_1:=& \overline{X_1 \cup N_1}\setminus (N_2 \cup N_3)\\
		X'_2:=& \overline{X_2 \cup N_2}\setminus (N_1 \cup N_3)\\
		X'_3:=& \overline{X_3 \cup N_3}\setminus (N_1 \cup N_2)
	\end{align*}
Attaching the $1$--handles $N_i$ to $X_i$ results in the boundary connected sum with $S^1 \times B^3.$ However, removing the other two neighborhoods from $X_i$ do not change its topology. This is due to the fact that each curve lies in the intersection of two pieces of our trisection. This has the effect of ``digging a trench'' out of $X_i.$ On the other hand, each one of these neighborhoods are attached to the trisection surface which increases the genus of the trisection by three. This should be expected from the equation $\chi(X)= 2+ g -3k.$

The following theorem is the trisection analog of the Reidermeister-Singer Theorem for Heegaard splittings. 
\begin{theorem}[Gay-Kirby, 2012~\cite{gay}] \label{T:closeduniqueness}
Every smooth, closed, connected, oriented $4$--manifold admits a trisection. Moreover, any two trisections of the same $4$--manifold become isotopic after a finite number of stabilizations.
\end{theorem}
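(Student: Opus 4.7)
My approach follows the Morse $2$--function perspective: a $(g,k)$--trisection of $X$ corresponds to a smooth map $G\colon X\to D^2$ whose singular set is a disjoint union of three arcs of indefinite folds meeting radially at the origin, with the preimages of the three sectors giving $X_1,X_2,X_3$ and the preimage of the origin being the trisection surface $F_g$. I will establish existence and uniqueness in two stages.

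For existence, I begin with a handle decomposition of $X$ having a single $0$--handle, $k_1$ $1$--handles, some collection of $2$--handles, $k_3$ $3$--handles, and a single $4$--handle. Setting $X_1$ equal to the union of the $0$-- and $1$--handles, and dually $X_3$ equal to the union of the $3$-- and $4$--handles (read upside down), immediately produces two of the three required $1$--handlebodies. The crux is to reorganize the $2$--handles so that the middle region $X_2$ is also a $1$--handlebody. I stabilize the Heegaard splitting of $\partial X_1$ until all $2$--handle attaching curves lie on a common Heegaard surface $\Sigma\subset\partial X_1$, then slide them until the framed link presents a standard unlink on $\Sigma$. Introducing cancelling $2/3$--handle pairs as needed balances the three genera so that $k_1=k_2=k_3=k$, and the pairwise intersections $X_i\cap X_{i\pm 1}$ become genus $g$ handlebodies assembling into Heegaard splittings of each $\partial X_i\cong\#^k S^1\times S^2$.

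For uniqueness, given two trisections $\tri,\tri'$ of $X$ I pass to the associated Morse $2$--functions $G_0,G_1$ and connect them by a generic one--parameter family $\{G_t\}_{t\in[0,1]}$ of smooth maps $X\to D^2$. By the Cerf theory for Morse $2$--functions (developed by Hatcher--Wagoner and extended by Lekili), this family crosses only finitely many codimension--one bifurcations: births and deaths of eye--shaped fold loops, cusp merges, unsink moves, and fold crossings. The plan is to show that each such move can be absorbed into a finite sequence of the stabilization operations described before the theorem, possibly after a preliminary ambient isotopy of $X$. Consequently, after finitely many stabilizations on each side, $\tri$ and $\tri'$ become isotopic.

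The main obstacle is the final step of the uniqueness argument: verifying that \emph{every} codimension--one Cerf transition between trisection--type Morse $2$--functions is realised by the single canonical stabilization move. Cataloguing the bifurcations and checking that each one factors through this one operation is the substantive $4$--dimensional analogue of the Reidemeister--Singer theorem and constitutes the technical heart of Gay--Kirby~\cite{gay}.
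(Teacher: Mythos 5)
The paper does not prove Theorem~\ref{T:closeduniqueness}: it is quoted and cited from Gay--Kirby with no in-text argument, so there is no ``paper's own proof'' to compare against. Your sketch is, however, broadly consistent with the cited proof and with the relative-boundary version the paper does prove as Theorem~\ref{T:4dexistence}, so it is worth flagging where it is and is not precise.

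On existence, your framing via a handle decomposition (with $X_1$ the $0$-- and $1$--handles, $X_3$ the $3$-- and $4$--handles read upside down, and $X_2$ built from the $2$--handles) is the approach taken both by Gay--Kirby and, in Morse-function language, by the paper's Theorem~\ref{T:4dexistence}. But the phrase ``slide them until the framed link presents a standard unlink on $\Sigma$'' is misleading. The framed attaching link $L$ cannot in general be made into an unlink, nor does it need to be. The correct operation, as in the paper's proof, is to project $L$ to the Heegaard surface, stabilize the splitting at each crossing of the diagram (and at extra kinks introduced via Reidemeister~$1$ to correct framings) so that $L$ becomes \emph{embedded} on $\Sigma$ with each component meeting the co-core of a distinct new $1$--handle exactly once. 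The resulting picture has no crossings in the diagram, but the link is not an unlink; what matters is the cancelling $1$--$2$ pair structure, which is what forces $X_2$ to be a $1$--handlebody. Your mention of inserting cancelling $2$--$3$ pairs to balance the pieces is correct and matches the paper's treatment of the ``general case when $c>h_2$''.

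On uniqueness, you correctly identify the strategy (interpolate between trisection-type Morse $2$--functions by a generic homotopy and absorb every codimension-one bifurcation into stabilizations) and you correctly identify that cataloguing those bifurcations is the technical heart of Gay--Kirby. Two small caveats: the relevant Cerf-type theory for indefinite Morse $2$--functions is due to Gay--Kirby themselves rather than Hatcher--Wagoner, and ``unsink'' is a move for wrinkled fibrations with Lefschetz points rather than one of the generic codimension-one singularities for pure Morse $2$--functions. Since your proposal, like the paper, ultimately defers the uniqueness step to the reference, this is an honest sketch of a cited result rather than an independent proof, and that is the appropriate thing to say about it.
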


As mentioned above, a relative trisection induces a structure on the bounding $3$--manifold known as an \emph{open book decomposition}. See \cite{etnyre} for a detailed introduction to open books. 
\begin{definition}
An \emph{open book decomposition} of a connected $3$--manifold $M$ is a pair $(B, \pi)$ such that $B$ is a link in $M$ called the \emph{binding} and $\pi: M \setminus B \rightarrow S^1$ is a fibration such that the closure of the fibers $\overline{\pi^{-1}(t)} = \Sigma_t,$ called \emph{pages}, are genus $g$ surfaces with $\partial \Sigma_t = B$ for every $t.$
\end{definition} It is a well known result of Alexander that every $3$--manifold admits an open book decomposition \cite{alexander}. An \emph{abstract open book} is a pair $(\Sigma, \phi)$, where $\Sigma$ is a surface with boundary and $\phi \in \textrm{\emph{Diff}}_+(\Sigma, \partial \Sigma).$ If we construct the mapping torus $\Sigma_\phi$, we can attach $S^1 \times D^2$ to each boundary component so that $\partial(\Sigma \times \{t\}) = \underset{b}{\sqcup} S^1 \times \{t\} \in S^1 \times \partial D^2$. The result is a closed $3$--manifold $M_\phi$ equipped with an open book decomposition with pages $\Sigma$ and binding given by the cores of the solid tori. We will use both types of open books, depending on which one better suits our needs.

Open book decompositions can also be stabilized. Given an abstract open book $(P, \phi),$ choose a properly embedded arc $\alpha \subset P.$ Attach a $2$--dimensional $1$--handle to $\partial\alpha \times I \subset \partial P,$ giving a new surface $P'.$ The co-core of the $1$--handle together with $\alpha$ comprise a simple closed curve $\gamma \subset P',$ which we require to have page framing $\mp1.$ Define the new abstract open book $(P', \tau^{\pm}_{\gamma} \circ \phi)$, where $\tau^{\pm}_\gamma$ denotes a positive/negative Dehn twist about $\gamma.$ This process is called a \emph{positive/negative Hopf stabilization} of $(P, \phi).$ It is a standard result that $M_\phi \cong M_{\tau_{\gamma} \circ \phi}$. The page $P'$ can also be viewed as the result of plumbing a Hopf band onto $P$ along $\alpha$. 

\begin{definition}
Let $S$ and $X$ be smooth, compact, connected, oriented manifolds of dimension $2$ and $4$ respectively. A \emph{Lefschetz fibration} on $X$ is a map $f:X \rightarrow S,$ such that
	\begin{enumerate}[i)]
		\item $f$ has finitely many critical points  $\Gamma=\{p_1, \ldots, p_n\}\subset int(X)$ such that $f(p_i) \neq f(p_j)$ for $i \neq j$
		\item around each critical point $f$ can be locally modeled by an orientation preserving chart as $f(u, v) = u_1^2 + v_2^2.$
		\item in the complement of the singular fibers, $f^{-1}(f(\Gamma)),$ $f$ is a smooth fibration with fibers $F$
	\end{enumerate}
The fibers of critical values are said to be \emph{singular} and all other fibers are \emph{regular}. Removing the condition that charts preserve orientation results in what is known as an \emph{achiral Lefschetz fibration.}
\end{definition}
Like relative trisections, Lefschetz fibrations over the disk with bounded fibers also induce open book decompositions on the boundary. Additionally, it is a straight forward process to obtain a relative trisection of $X$ from such a Lefschetz fibration. Thus, we will restrict our attention to Lefschetz fibrations over $D^2$ whose regular fibers are surfaces with boundary.

The critical points $p_i$ correspond to $2$--handles attached to $F \times D^2$ along simple, closed curves $\gamma_i$ called \emph{vanishing cycles}. The page framings of the $2$--handles are $-1$ or $+1$, depending on whether the local models of the singularities are orientation preserving or reversing respectively. The induced open book decomposition is then given by $(F, \tau^{\pm}_{\gamma_1} \circ \cdots \tau^{\pm}_{\gamma_n})$.

We can obtain a new Lefschetz fibration $\lef'$ from $\lef$ by attaching a $4$--dimensional canceling $1-2$ pair to $X$ as follows: Attach the $1$--handle $h^1$ so that the attaching sphere lies in the binding of the open book decomposition of $\partial X$ induced by $f;$ thus attaching a $2$--dimensional $1$--handle to each of the fibers. The cancelling $2$--handle $h^2$ is then attached to one of these fibers along an embedded curve with page framing $\pm1$ which intersects the co-core of the new $1$--handle exactly once. This ensures that $h^2$ corresponds to a Lefschetz singularity. This modification defines a new Lefschetz fibration $\lef'$ on $X$ whose pages and singular values differ from that of $\lef$ as above. Moreover, the global monodromy of $\lef'$ is given by $\tau^{\mp1} \circ \phi$, where $\phi$ is the monodromy of $\lef.$ This modification induces a Hopf stabilization of the open book decomposition of $\partial X$ induced by $\lef.$ (For more details see \cite{gompf}, \cite{osb}.)
\end{section}

\begin{section}{Trisecting Cobordisms\label{S:basicdefs}}
Just as in the closed case, a trisection of a $4$--manifold $X$ with non-empty boundary is a decomposition $X = X_1 \cup X_2 \cup X_3$ where $X_i \cong \natural^k S^1 \times B^3$, for some $k,$ such that the $X_i$'s have ``nice'' intersections. Before the proper definition can be stated, we will discuss the model pieces to which the $X_i$'s and their intersections will be diffeomorphic.

We begin with $F_{g,b},$ a connected genus $g$ surface with $b$ boundary components, and attach $n$ $3$--dimensional $2$--handles to $F_{g,b} \times \{1\} \subset F_{g,b} \times [0,1]$ along a collection of $n$ essential, disjoint, simple, closed curves. If $\partial X$ has $m$ connected components, then we require that surgery on $F_{g,b}$ along the curves separate $F_{g,b}$ into $m$ components, none of which are closed. Such a $3$--manifold $C$ is called a \emph{compression body}. We define our model pieces $Z := C \times I \cong \natural^k S^1 \times B^3,$ where $k = 2g +b-1 - n.$

\begin{remark}\label{D:compressionbody} In general, a \emph{compression body} is a $3$--manifold which is the result of attaching $0$ and $1$ handles to $\Sigma \times I,$ where $\Sigma$ is a compact surface, with or without boundary. In what follows we will only be dealing with compression bodies such as $C.$
\end{remark}

It is sometimes convenient to consider a Morse function $f: C \rightarrow [0,1]$ with $f^{-1}(0) = F_{g,b}$ and $f^{-1}(1)= P,$ the ``other end'' of our compression body. We will denote $P = \underset{i = 1}{\overset{m}{\sqcup}} P_i,$ where $P_i \cong F_{p_i, b_i},$ $\underset{i}{\Sigma} p_i = g-(n-(m-1))$ and $\underset{i}{\Sigma} b_i = b.$ The function $f$ will only have $n$ index--$2$ critical points. Let us arrange for the first $n-(m-1)$ critical points to have distinct critical values such that passing through these critical levels does not increase the number of components of the level sets. Let us further arrange for the remaining $m-1$ critical points (which each correspond to a separating $2$--handle) to have the same critical value. A schematic for this construction is given in Figure~\ref{F:constructing},where the red lines represent the critical levels of $f$)

\begin{figure}[ht!]
\centering{
	\labellist
		\pinlabel {$C \; \cong$} [r] at 10 55
		\pinlabel $P_1$ at 18 162
		\pinlabel $P_m$ at 188 162
		\pinlabel $F_{g,b}$ [t] at 100 -2
		
		\pinlabel {$\cong \; Z$} [l] at 545 60
		\pinlabel $F_{g,b}$ [t] at 440 -2
		\pinlabel \rotatebox{-45}{\resizebox{!}{18pt}{$]$}} at 528 10
		\pinlabel $0$ at 522 -18
		\pinlabel $1$ at 555 20
		
		\pinlabel $\cdots$ at 103 115
		
		\pinlabel $\cdots$ at 440 116
		
		\pinlabel \rotatebox{30}{$\leftarrow$} [l] at 175 82
		\pinlabel \resizebox{!}{5pt}{$m-1$ separating} [l] at 200 94
		\pinlabel \resizebox{!}{5pt}{$2$--handles} [l] at 215 80
		\pinlabel \resizebox{5pt}{15pt}{$\}$} at 182 29
		\pinlabel \resizebox{!}{6pt}{non-separating} [l] at 185 41
		\pinlabel \resizebox{!}{6pt}{$2$--handles} [l] at 198 25
	\endlabellist
\includegraphics[scale=.5]{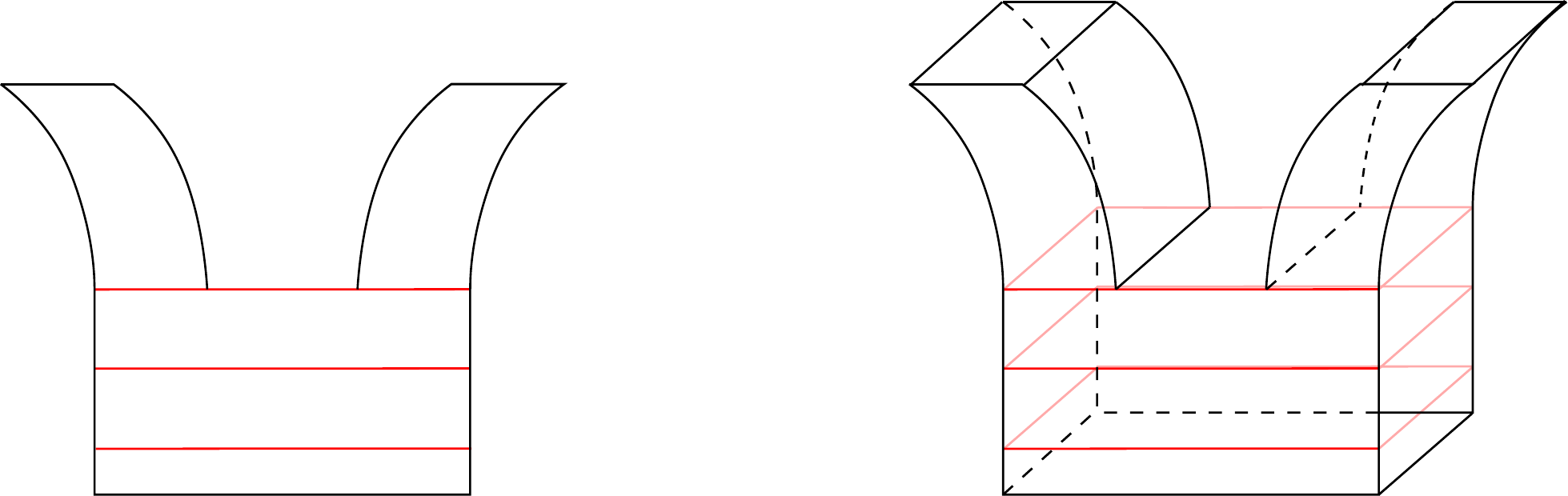}
}
\vspace{.2cm}
	\caption{Constructing the Model Pieces\label{F:constructing}}
\end{figure}

Note that by constructing $C$ upside down, it becomes immediately clear that $C$ is a $3$--dimensional handlebody: We attach $n$ $3$--dimensional $1$--handles to $P \times I,$ ensuring to connect every component. Since each $P_i \times I$ is a neighborhood of a punctured surface, we have that $P_i \times I \cong \natural^{l_i}S^1 \times D^2,$ where $l_i = 2p_i +(b_i-1).$ Thus, attaching $1$--handles in the prescribed manner gives us
	$$C \cong \natural^k S^1 \times D^2,$$
where $k =\underset{i}{\Sigma}l_i + n - (m-1)=2g + b-1 - n.$ (We will regularly make use of the fact that our compression bodies are $3$--dimensional handlebodies, for which it is essential that $F_{g,b}$ has non-empty boundary.) Thus, 
	$$Z \cong \natural^k S^1 \times B^3.$$
For the intersections consider $\partial Z,$ which we decompose into two pieces,
	$$\begin{array}{rl}
		\partial_{In} Z := & \left(C \times \{0\} \right) \cup \left(F_{g,b} \times I \right) \cup \left(C \times \{1\}\right)\cr
		\partial_{Out} Z := & \left(\partial F_{g,b} \times I \times I \right) \cup \left(P \times I\right)\\
	\end{array}$$
called the \emph{inner and outer boundaries of} $Z.$ See Figure~\ref{F:Decompose}. $\partial_{In} Z$ is the portion of $\partial Z$ which gets  glued to the other pieces in the trisection, whereas $\partial_{Out} Z$ contributes to $\partial X.$

\begin{figure}[ht!]
\centering{
	\labellist
		\pinlabel $\partial_{In}Z_i=$ [r] at 20 60
		\pinlabel $=\partial_{Out}Z_i$ [l] at 533 65
		
		\pinlabel $\cdots$ at 440 125
		\pinlabel $\cdots$ at 104 108
		\pinlabel $\cdots$ at 132 142 
	\endlabellist
	\includegraphics[scale=.45]{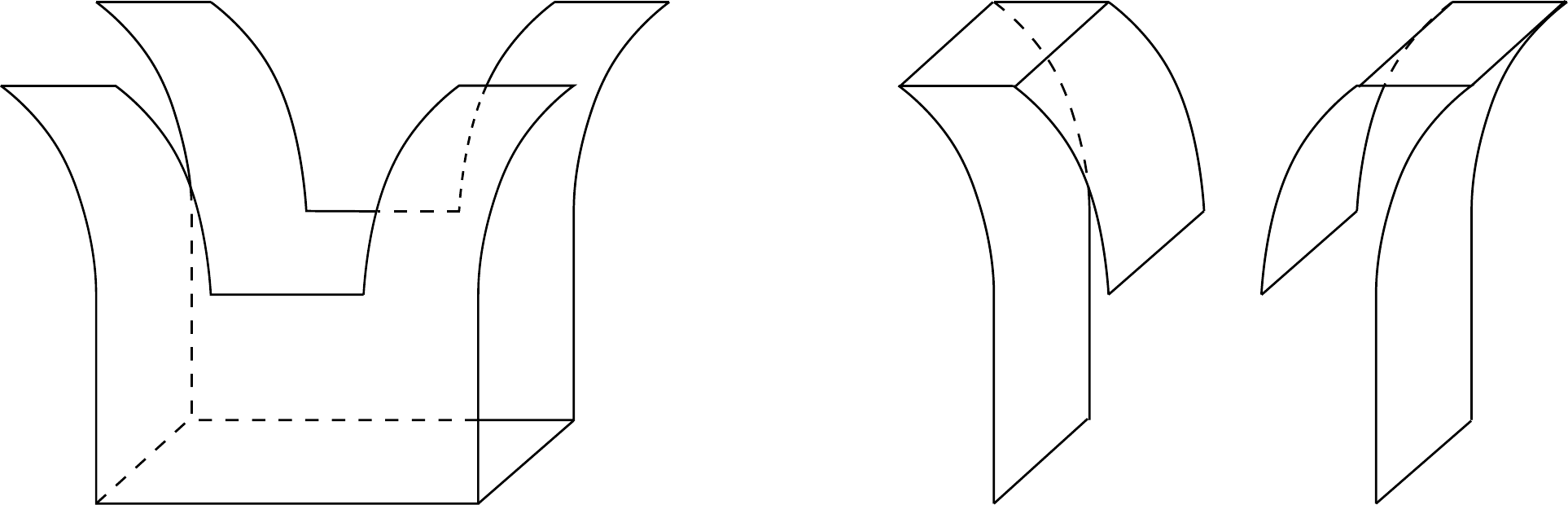}
}
	\caption{Decomposing $\partial Z$ \label{F:Decompose}}
\end{figure}

There is a standard \emph{generalized Heegaard splitting} of $\partial_{In} Z,$ i.e., a decomposition of a $3$--manifold with boundary $M = C_1 \cup C_2,$ where $C_1 \cong C_2$ are compression bodies which intersect along a surface with boundary. We decompose $\partial_{In} Z$ as
	$$\partial_{In} Z = \left( C \times \{0\} \cup F_{g,b}\times [0,1/2]\right) \cup \left( F_{g,b} \times [1/2,1] \cup C \times \{1\} \right).$$
which we will denote as  $\partial_{In} Z= Y_0^+ \cup Y_0^-,$ where $Y_0^+ \cap Y_0^- \cong F_{g_0,b}.$ We also allow for further stabilizations of this splitting (on the interior of the surface) some number of times (possibly zero) which increases the genus of the splitting. We will denote this stabilized, standard splitting as $\partial_{In} Z = Y^+ \cup Y^-,$ where $Y^+ \cap Y^- = F_{g,b}.$ It should be noted that the stabilizations involved do not alter the $4$--manifold $Z_i$ in any way, only the decomposition of the $3$--manifold $\partial_{In} Z.$

\begin{figure}[ht!]
\centering{
	\labellist
		\pinlabel {$Y^+$} at -20 100
		\pinlabel {$Y^-$} at 440 200
		\pinlabel {$F_{g,b}$} [l] at 310 0
		
		\pinlabel \resizebox{!}{6pt}{$\cdots$} at 105 113
		\pinlabel \resizebox{!}{6pt}{$\cdots$} at 308 180
	\endlabellist
	\includegraphics[scale=.47]{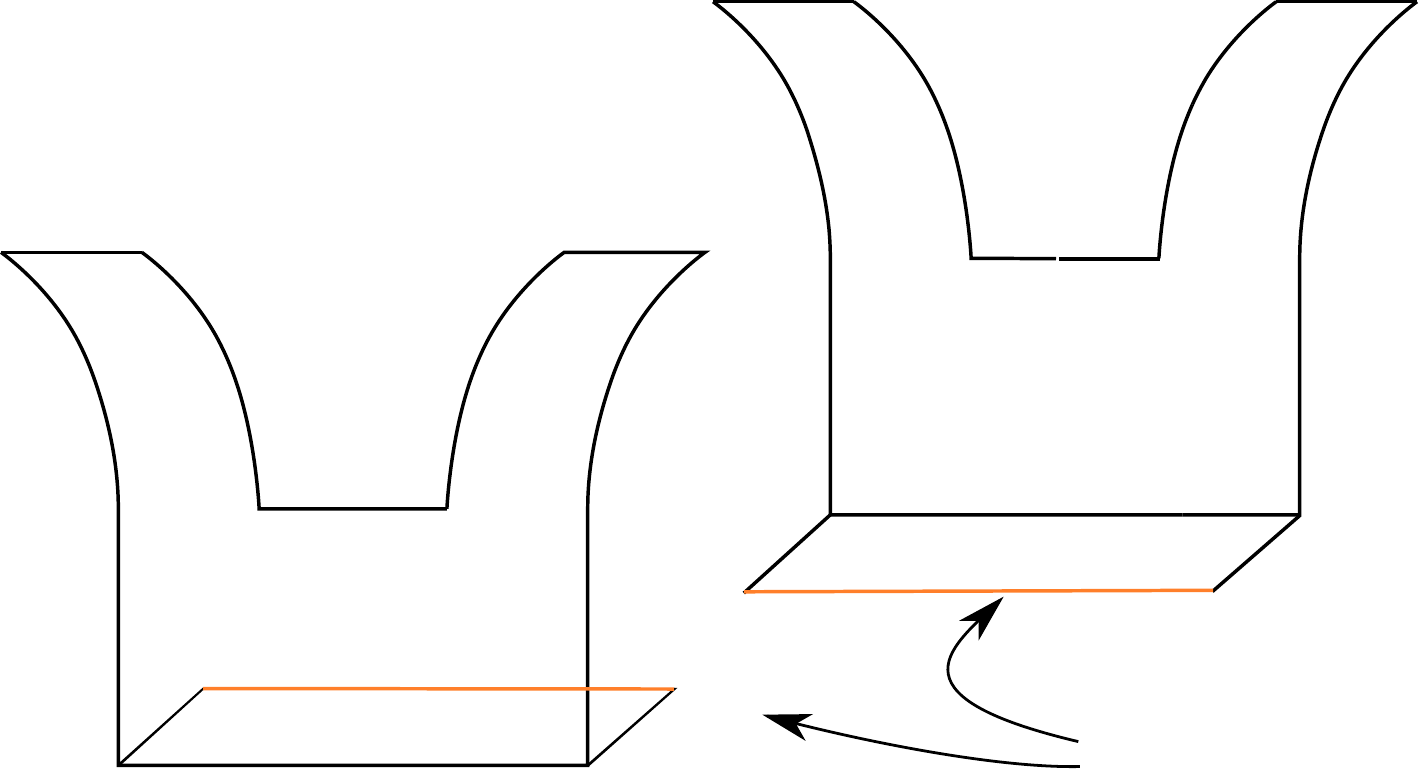}
}
	\label{F:innerboundary}
	\caption{Decomposing $\partial_{In} Z$ as $Y^+ \cup Y^-$}
\end{figure}

The ``nice intersections'' mentioned earlier can now be defined: $X_i \cap X_{i+1} \cong Y^+$ and $X_i \cap X_{i-1} \cong Y^-.$ Alternately phrased, $(X_i \cap X_{i+1}) \cup (X_i \cap X_{i-1})$ is this particular generalized Heegaard splitting of $\partial_{In} X_i \cong \partial_{In} Z.$ We now give the proper definition using the above notation

\begin{definition}
A \emph{relative trisection} of a smooth $4$--manifold with boundary $X$ is a decomposition $X = X_1 \cup X_2 \cup X_3$ such that, for some $Z$ with splitting $\partial_{In} Z = Y^+ \cup Y^-$ constructed as above
	\begin{enumerate}[i)]
		\item for each $i$ there exists a diffeomorphism $\varphi_i: X_i \rightarrow Z,$
		
		\item for each $i,$ we have $\varphi_i(X_i \cap X_{i+1}) = Y^+$ and $\varphi_i(X_i \cap X_{i-1}) = Y^-$
	\end{enumerate}
where indices are taken mod $3$. We will sometimes denote a trisection of $X$ as $\tri_X,$ or $\tri.$
\end{definition}

It should be noted that the phrase ``for some $Z$'' hides many quantifiers which are necessary in defining relative trisections. We omit them in the definition because, in the case of multiple boundary components, the notation for a relative trisection can quickly become messy. Thus, we simply refer to a relative trisection as $\tri$ with the understanding that the topology of $Z$ is determined by:
\begin{enumerate}[i)]
	\item $m = |\partial X|$ 
	\item $n$ - the number of $2$--handles attached
	\item $g$ - the genus of the trisection surface
	\item $b$ - the number of boundary components of the trisection surface
	\item $p_i$ - the genus of each component $P_i$ of $P$
\end{enumerate}
As a consequence, the triple intersection $X_1 \cap X_2 \cap X_3= F_{g,b}$ is a surface with boundary called the \emph{trisection surface}, and the outer boundaries comprise $\partial X.$ Let us denote $\partial_{Out} X_i = \varphi^{-1}_i(\partial_{Out} Z).$ Notice that $\partial_{Out} X_i = X_i \cap \partial X.$ The connected components of $\partial_{Out} X_i$ are given by $P_i \times I$ together with $\nu \partial P_i,$ a $3$--dimensional neighborhood of $\partial P_i.$ Thus, gluing the $X_i$'s to one another induces a fibration $\partial X \setminus \nu\partial P \rightarrow S^1$ with fiber $P.$ In other words, $(P, \phi)$ is an abstract open book corresponding to $\partial X,$ where $\phi$ is determined by the attaching maps $\{\varphi_i\}.$

We have thus proved the following lemma, which generalizes Gay and Kirby's~\cite{gay} result to smooth, compact $4$--manifolds with an arbitrary number of boundary components. 
\begin{lemma}\label{L:inducedobd}
	A relative trisection of $X$ induces an open book decomposition of each component of $\partial X.$
\end{lemma}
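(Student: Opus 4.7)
The statement essentially unpacks what has already been assembled in the discussion preceding it, so my plan is to organize that information into a clean identification of pages, binding, and monodromy. Fix a boundary component $B$ of $X$ and, for each $i \in \{1,2,3\}$, consider $B \cap X_i = B \cap \partial_{Out} X_i$, which via $\varphi_i$ corresponds to one connected component of $\partial_{Out} Z$. That component has the product form $P_i \times I$ together with the adjacent piece $\partial P_i \times I \times I$ coming from $\partial F_{g,b} \times I \times I$. The plan is to designate the (interior) fibers $P_i \times \{t\}$ as candidate pages and the arcs $\partial P_i \times \{t\} \subset \partial F_{g,b} \times I \times I$ as contributing to a neighborhood of the proposed binding.

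Next, I would track how the three pieces $B \cap X_1$, $B \cap X_2$, $B \cap X_3$ glue together. The trisection axioms only prescribe the gluings along $\partial_{In} X_i$, so the key point is to restrict to the boundary: the identification $X_i \cap X_{i+1} \cong Y^+$ restricts on $\partial_{Out}$ to an identification of one ``end'' of $P_i \times I$ inside $B \cap X_i$ with one end of $P_{i+1} \times I$ inside $B \cap X_{i+1}$, and similarly for $X_i \cap X_{i-1} \cong Y^-$ at the opposite end. Chaining these three identifications around the cycle $X_1 \to X_2 \to X_3 \to X_1$ exhibits the union $\bigcup_i (P_i \times I) \subset B$ as a mapping torus $P_\phi$ of a surface diffeomorphism $\phi \in \mathrm{Diff}_+(P, \partial P)$, where $\phi$ is the composition of the three gluing maps; this gives the fibration $B \setminus \nu(\partial P) \to S^1$.

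Then I would check that the complementary three pieces $\partial P_i \times I \times I$, after the same gluings, fit together around $\partial P$ to form a tubular neighborhood $\partial P \times D^2$, with each circle $\partial P_i \times \{*\}$ appearing as the core. Concretely, the three copies of the square $\partial P_i \times I \times I$ are glued along pairs of adjacent edges in the pattern of a triangle around $\partial P_i$, producing $\partial P_i \times D^2$; its core $\partial P_i \times \{0\}$ is the binding component, and the radial foliation of $D^2$ extends the $S^1$--valued fibration over this neighborhood. Together with the previous step, this yields an open book decomposition $(\partial P|_B, \pi)$ with pages $P|_B$ and monodromy $\phi|_B$.

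The only real subtlety is confirming that the three gluing identifications on $\partial_{Out}$ are compatible along their common edges (the ``corners'' where $P \times I$ meets $\partial F_{g,b} \times I \times I$), so that the mapping torus and the solid torus neighborhoods fit together without mismatches. This is the step I expect to require the most care, but it follows from the fact that the diffeomorphisms $\varphi_i$ were chosen to respect the decomposition $\partial_{In} Z = Y^+ \cup Y^-$, so the induced corner data on $\partial_{Out}$ are consistent by construction. Once this compatibility is verified, the assembled data is by definition an abstract open book $(P|_B, \phi|_B)$ inducing the desired open book decomposition on $B$, and applying the argument to each component of $\partial X$ finishes the proof.
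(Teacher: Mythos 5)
Your proposal is correct and follows essentially the same approach as the paper: it identifies $\partial_{Out} X_i = X_i \cap \partial X$ with $P\times I$ together with a collar near $\partial P$, observes that the trisection gluings chain the three copies of $P\times I$ into a mapping torus, and checks that the collar pieces assemble into $\partial P \times D^2$. The paper asserts this more tersely in the discussion preceding the lemma, and your version supplies the details (one should be slightly careful that each "square'' $\partial P \times I \times I$ is really a pentagon, since the edge shared with $\partial_{In} Z$ is subdivided at the level $F_{g,b}\times\{1/2\} = Y^+\cap Y^-$; but the three pieces still glue to a disk, so the conclusion stands).
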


\begin{example}[Relative Trisection of $B^4$]
The simplest relative trisection is the trivial trisection of $B^4.$ Let us use the identification $B^4 \cong D^2 \times D^2,$ where $D^2 =\{re^{i\theta} \in \mathbb{C}| r\leq 1\}.$ Decompose the unit disk $D^2 = D_1 \cup D_2 \cup D_3,$ where $D_j = \{re^{i\theta} \in D^2| 2\pi j/3\leq \theta \leq 2\pi(j+1)/3\}$, and let $p_1:D^2 \times D^2 \rightarrow D^2$ be the projection onto the first factor. Defining $X_j = p_1^{-1}(D_j)$ yields a relative trisection of $B^4$ where
\begin{itemize}
	\item[-] $X_i \diffeo B^4$

	\item[-] $X_i \cap X_{i+1} \diffeo B^3$

	\item[-] $X_1 \cap X_2 \cap X_3 \diffeo D^2$
\end{itemize}
Figure~\ref{F:B4} is a $3$--dimensional representation of this trisection, where the dimension of each sphere or disk is one less than the dimension of the sphere or disk it represents. 
	{\centering\begin{figure}[ht!]
		\centering{
			\includegraphics[scale=.5]{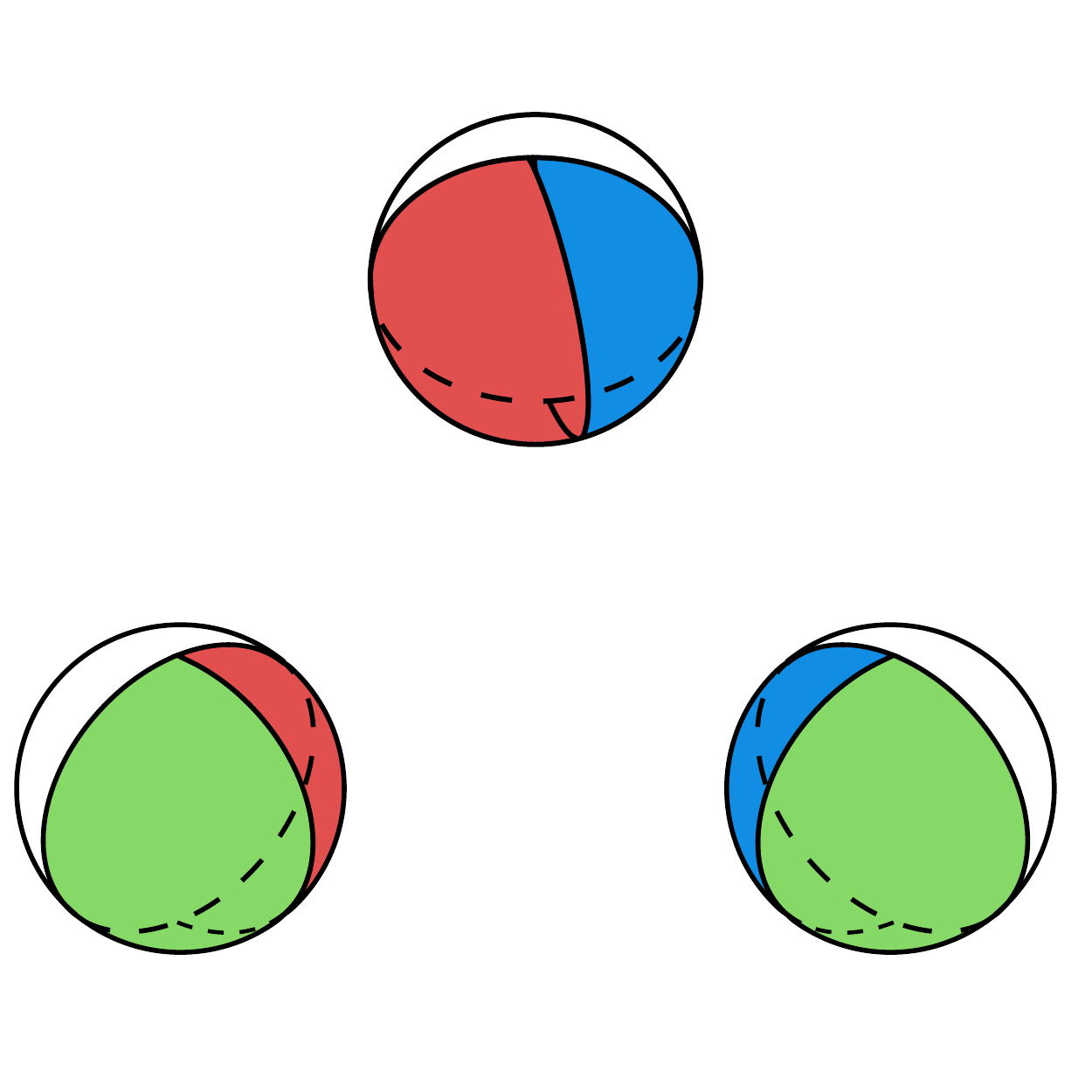}
			
			\caption{Trivial Trisection $\mathcal{B}$ of $B^4$ \label{F:B4}}
			}
	\end{figure}}
The colored regions on a given $X_i$ comprise $\partial_{In} X_i$ (which are modeled by $D^2$). We then take a genus--$0$ generalized Heegaard splitting of $\partial_{In} X_i \diffeo B_i^{3 +} \cup_{D^2} B_i^{3 +}.$ Each $B_i^{3 \pm}$ is colored so as to indicate where $X_i$ will glue to $X_j.$ Taking indices mod 3, we trivially glue $B_i^{3 +}$ to $B_{i+1}^{3 -}.$ Doing so yields $B^4 \diffeo X_1 \cup X_2 \cup X_3.$ Moreover, we see that $\partial_{Out} X_i \diffeo B^3$ and our gluing gives us 
	$$\partial X = \bigcup_i \partial_{Out} X_i \diffeo S^3.$$
Notice that the triple intersection has boundary. In Figure~\ref{F:B4}, it is represented by the arc ($B^1$) which separates each color on the inner boundaries. As one might expect, the induced open book $\trib|_{\partial B^4}$ is the trivial open book on $S^3.$
\end{example}

\begin{example}[Relative Trisection of $S^3 \times I$]
Let $\trib_0$ be the trivial trisection of $B^4$ and let us take the connected sum $(B^4, \trib_0) \# (B^4, \trib_0)$ in such a way that neighborhoods of points in the trisection surfaces are identified. (For clarity, let us denote the pieces of the trisections as $Z_1 \cup Z_2 \cup Z_3$ and $Z'_1 \cup Z'_2 \cup Z'_3$, where $Z_i \cong Z'_i$.) The claim is that this connected sum $\trib_0 \# \trib_0$ defines a relative trisection of $B^4 \# B^4 \cong S^3 \times I.$
	Let $B_i$ be a $3$--ball whose boundary is decomposed as the union of two disks along their boundaries $\partial B_i = D^{+}_i \underset{\partial}{\cup} D^{-}_i,$ where $D^{\pm}_i \cong D^2.$ By making the identifications $D^{+}_i \sim D^{-}_{i+1},$ we obtain $S^3 = B_1 \cup B_2 \cup B_3$. We then extend this decomposition to $S^3 \times I = (B_1 \times I) \cup (B_2 \times I)\cup (B_3 \times I).$
	Notice that this is not a relative trisection, as each $B_i \times I$ has not been given the structure of a model piece. In particular, it is not a thickening of a compression body from an annulus to two disjoint disks. However, each $B_i \times I$ serves as a $1$--handle joining $Z_i$ and $Z'_i$ in such a way that 
	$$(B_1 \cap B_2 \cap B_3) \times \{0\} \subset Z_1 \cap Z_2 \cap Z_3$$
	and
	$$(B_1 \cap B_2 \cap B_3) \times \{1\} \subset Z'_1 \cap Z'_2 \cap Z'_3.$$
	Thus, we have a decomposition
	$$S^3 \times I = (Z_1 \underset{1-h}{\cup} Z'_1) \cup (Z_2 \underset{1-h}{\cup} Z'_2) \cup (Z_3 \underset{1-h}{\cup} Z'_3).$$
Each $Z_i \underset{1-h}{\cup} Z'_i$ is diffeomorphic to $C \times I$, where $C$ is the compression body obtained from attaching $3$--dimensional $2$--handle to $S^1 \times I \times I$ along $S^1 \times \{1/2\}\times \{1\}$. Moreover, we have that $(Z_i \underset{1-h}{\cup} Z'_i) \cap (Z_{i+1} \underset{1-h}{\cup} Z'_{i+1}) \cong B^3.$ Since we have taken the connected sum along the interiors, the boundary data has not been altered. Therefore, $\trib_0 \# \trib_0$ is a trisection of $S^3 \times I$ such that 
\begin{itemize}
	\item[-]each piece of the trisection is $B^3$,
	\item[-]the trisection surface is $S^1 \times I$,
	\item[-]each boundary component is endowed with the trivial open book decomposition of $S^3.$
\end{itemize}
	
\begin{figure}[ht!]
\centering{
	\includegraphics[scale=.6]{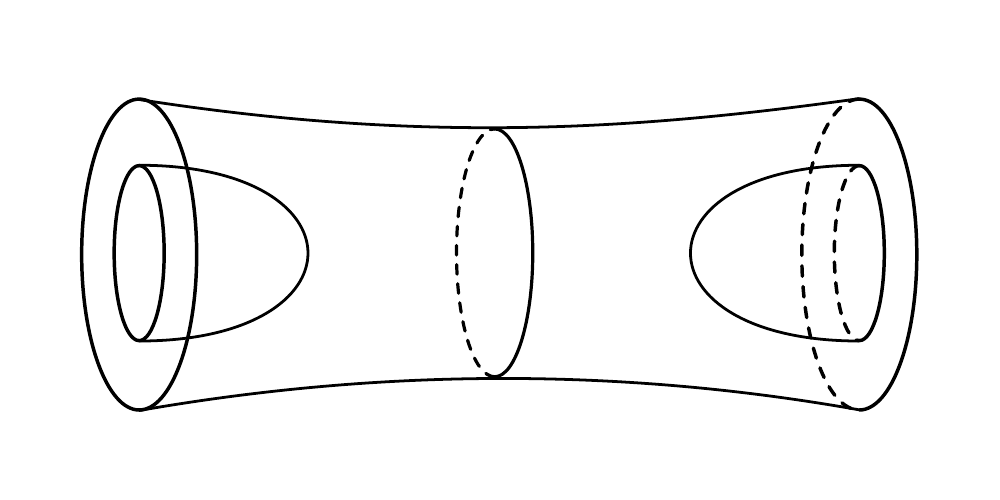}
	
	\label{F:cobordismex}
	\caption{Compression body $C\cong X_i\cap X_j$ for $S^3 \times I$}
}
\end{figure}
\end{example}
The above example exhibits a bit more than a trisection of $S^3 \times I.$ In fact, it shows that a connected sum of any two trisections, relative or closed, results in a trisection.

\begin{theorem}\label{T:4dexistence}
	Let $X$ be a smooth, compact, connected $4$--manifold with boundary such that each connected component of $\partial X$ is equipped with a fixed open book decomposition. There exists a trisection of $X$ which restricts to $\partial X$ as the given open books.
\end{theorem}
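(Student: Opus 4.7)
The plan is to reduce Theorem~\ref{T:4dexistence} to the connected-boundary case proved in \cite{gay} by attaching $4$-dimensional $1$-handles that connect the boundary components along their bindings. Write $\partial X = Y_1 \sqcup \cdots \sqcup Y_m$ with prescribed open books $(P_i, \phi_i)$ having bindings $B_i$. Pick a point $p_i \in B_i$ for each $i$, and for $i = 2, \ldots, m$ attach a $4$-dimensional $1$-handle $h_i$ to $X$ whose feet are small neighborhoods of $p_1$ and $p_i$, arranged so that the core of each $h_i$ meets the bindings transversely at its endpoints. Let $\hat{X} = X \cup h_2 \cup \cdots \cup h_m$. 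Because the $m-1$ handles tie every $Y_i$ to $Y_1$, the resulting boundary $\partial \hat{X} \cong Y_1 \# Y_2 \# \cdots \# Y_m$ is connected.

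Since each $h_i$ is attached along points of the bindings, the individual open books combine naturally into a single open book $(\hat{P}, \hat{\phi})$ on $\partial \hat{X}$: $\hat{P}$ is the boundary-connect-sum of the $P_i$'s along bands dual to the attaching arcs, and $\hat{\phi}$ is the diffeomorphism of $\hat{P}$ obtained by extending each $\phi_i$ by the identity across the connecting bands. Applying the Gay--Kirby theorem \cite{gay} to $\hat{X}$ with the open book $(\hat{P}, \hat{\phi})$ produces a relative trisection $\hat{\tri}$ of $\hat{X}$ inducing $(\hat{P}, \hat{\phi})$ on $\partial \hat{X}$.

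The final step is to arrange the $1$-handles $h_i$ in standard position with respect to $\hat{\tri}$ so that their removal yields a relative trisection of $X$. Because each attaching region lies in the binding of $(\hat{P}, \hat{\phi})$, which in turn sits on the trisection surface $F_{g,b}$, one isotopes the cocore $3$-disk of each $h_i$ so it meets $F_{g,b}$ in a single properly embedded arc, and takes a product neighborhood of this cocore that is split evenly among the three sectors $\hat{X}_1, \hat{X}_2, \hat{X}_3$. Cutting along these cocores then gives a decomposition $X = X_1 \cup X_2 \cup X_3$, and a direct check shows that each $X_i$ has the structure of the model piece $Z$ built from a compression body with $m$ ends $P_1, \ldots, P_m$. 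By construction, the induced open book on $Y_i$ recovers the given $(P_i, \phi_i)$, since the connect-sum bands have been cut out.

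The main obstacle is arranging the $1$-handles in this trisection-compatible position. The plausible way forward is to exploit that the attaching data lies on $F_{g,b}$ and to invoke the uniqueness of relative trisections of $\hat{X}$ up to stabilization \cite{gay}: any standard neighborhood of the binding can be placed in a prescribed form relative to $\hat{\tri}$, possibly after a stabilization, so that the cocores of the $h_i$ are ambient-isotopic to properly trisected arcs. Once such a form is achieved, the cutting operation preserves the trisected structure and the compression-body model of each sector.
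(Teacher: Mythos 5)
Your proposal takes a genuinely different route from the paper. The paper proves Theorem~\ref{T:4dexistence} directly: it builds a Morse function $f:X\to I$ compatible with the prescribed open books, sets $X_1 = f^{-1}([0,a])$ to absorb the $0$-- and $1$--handles (including the $m-1$ $1$--handles forced by connectivity), defines $X_2$ as a collar of $Y^+$ together with the $2$--handles, and $X_3$ as the rest, then verifies via explicit handle counts (projecting the attaching link $L$ onto the Heegaard surface, stabilizing to resolve crossings, arranging $h_1 = h_3$) that each piece is a model $4$--dimensional handlebody. You instead try to reduce to the connected-boundary theorem of Gay--Kirby by connect-summing the boundary components along the bindings via $4$--dimensional $1$--handles, applying \cite{gay} to the resulting $\hat{X}$, and then cutting back.

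The reduction has a genuine gap at the cut-back step, and it is precisely the step you flag as ``the main obstacle.'' After applying Gay--Kirby to $\hat{X}$ you obtain \emph{some} relative trisection $\hat{\tri}$ inducing the boundary-connect-sum open book $(\hat P, \hat\phi)$, but the cocore $3$--disks of the $h_i$ are arbitrary properly embedded $3$--balls in $\hat X$ whose boundary $2$--spheres meet $\partial\hat X$ in the connect-sum regions; nothing in the Gay--Kirby construction controls how these disks sit relative to $\hat{\tri}$ in the interior of $\hat X$. Your proposed fix --- invoking the Gay--Kirby uniqueness of relative trisections up to interior stabilization --- does not apply: that theorem compares two \emph{trisections} of the same manifold inducing the same boundary open book, and says nothing about the position of an auxiliary submanifold (the cocores) with respect to a given trisection. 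To use uniqueness you would first have to exhibit \emph{one} relative trisection of $\hat X$ in which the cocores are in the asserted standard form; but the natural way to produce such a trisection is to start from a relative trisection of $X$ and attach the $1$--handles compatibly --- which presupposes exactly the existence statement being proved. Moreover, even granting stable isotopy of trisections, stabilizations and isotopies of $\hat{\tri}$ do not move the cocores, so stable equivalence does not rescue the positioning. Until this circularity is broken --- for instance by an independent ``standard position'' lemma for properly embedded $3$--balls meeting the binding, which does not appear in \cite{gay} or in this paper --- the proposal does not establish the theorem. Note also that even with the gap filled, one would still need to verify that each cut piece has the compression-body-times-interval model structure, which is essentially the handle count that the paper's direct proof carries out.
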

The following proof is a natural extension of the proof given by Gay and Kirby in \cite{gay} to the case of multiple boundary components.
\begin{proof}
	Let $(B, \phi)$ be an open book decomposition of $\partial X$ with page $P.$ If $\partial X$ has $m$ connected components, then so does $P.$ We will use the given boundary data to construct a Morse function $f: X \rightarrow I.$
	
	Extend $\phi: \partial X \setminus \nu B \rightarrow S^1$ to the whole of $\partial X$, $\phi: \partial X \rightarrow D^2$ by $(x,z) \mapsto z$ for every $(x,z) \in B \times D^2.$ Then fix an identification of $D^2$ with $I \times I$ and compose $\phi$ with the projection onto the first factor, giving us a smooth map $f:\partial X \rightarrow I$ such that
	\begin{enumerate}[i)]
		\item $f^{-1}(0) \cong \underset{i = 1}{\overset{m}{\bigsqcup}}(P_i \times I) \cong f^{-1}(1)$
		\item $f^{-1}(t) \cong  \underset{i = 1}{\overset{m}{\bigsqcup}}\left(P_i \times\{0\} \cup (\partial P_i \times I) \cup P_i \times \{1\} \right) \; 0 < t < 1.$
	\end{enumerate}
Extend $f$ to a Morse function on all of $X$ and consider the handle decomposition given by $f$. Notice that since $X$ is connected, if $m>1$ then such a function necessarily admits $1$--handles. Let $h_i$ denote the number of $i$--handles. Without loss of generality, we can assume that the handles are ordered by index. Moreover, by adding canceling pairs we can arrange for $h_1 = h_3.$

Let $\varepsilon, a \in (0,1)$ be such that $f^{-1}([0, \varepsilon]) = f^{-1}(0) \times I$ and $(\varepsilon, a)$ contains all of the index $1$ critical values, but no others. We then have 
	$$\begin{array}{ll}
		f^{-1}([0, \varepsilon])& \cong f^{-1}(0) \times I\cr
							& \cong (P \times I) \times I\cr
							& \cong \underset{i = 1}{\overset{m}{\bigsqcup}}\left(\natural^{l_i}S^1 \times B^3\right),\\
	\end{array}$$
where $l_i = 2p_i + (b_i - 1).$ Define $X_1 = f^{-1}([0,a])$. Since $X$ is connected, we have
	$$\begin{array}{ll}
		X_1& \cong f^{-1}([0, \varepsilon]) \cup 1\textrm{-handles}\cr
			& \cong \underset{i = 1}{\overset{m}{\natural}}\left((\natural^{l_i}S^1 \times B^3) \natural (\natural^{h_1 -m+1}S^1 \times B^3)\right)\cr
			& \cong \natural^{l+h_1 - m+1}S^1 \times B^3,\\
	\end{array}$$
where $l = \underset{i}{\Sigma} l_i.$

We will now give $f^{-1}(a)$ a Heegaard splitting: Define $N = f^{-1}(\varepsilon)$ and $M = \overline{\partial(f^{-1}([0, \varepsilon]))\setminus N}.$ It is not hard to see that $M \cong N \cong P \times I,$ and thus there is a natural generalized Heegaard splitting of $N \cong \left(P \times [0, 1/2]\right) \cup \left(P \times [1/2,1]\right).$ Thus when we attach the $1$--handles, some of which connect components to each other, we have a sort of ``unbalanced decomposition'' $\partial X_1 = M \cup N',$ where $N'$ is diffeomorphic to
	$$\left(\left(F_{p,b} \times [0, 1/2]\right) \natural H_1\right) \bigcup \left(\left(F_{p,b} \times [1/2,1]\right) \natural H_2\right)$$
and $\#^{h_1 - m}S^1 \times S^2 = H_1 \cup H_2$ is the standard genus $h_1 - m$ Heegaard splitting. Note that $N' \cong f^{-1}(a).$ Let us denote this generalized Heegaard splitting $N' = Y^+ \cup Y^-,$ where $Y^+$ and $Y^-$ are compression bodies from $F_{p+h_1 -m+1, b}$ to $P$ which intersect along the surface of greater genus.

Let $L \subset N'$ be the framed link which corresponds to the attaching spheres of the $2$--handles given by $f.$ Project $L$ onto the splitting surface $F_{g,b} = Y^+ \cap Y^-$ in such a way that each component of $L$ non-trivially contributes to the total number of self intersections, or crossings, denoted by $c.$ This can be done by Reidermeister $1$ moves if necessary. 

We first consider the special case where $c = h_2.$ Stabilizing the generalized Heegaard splitting $Y^+ \cup Y^-$ at every crossing of $L$ resolves the double points by providing $1$--handles whose co-cores intersect a unique link component exactly once. Additionally, every link component has such an intersection by construction. We also wish for the framings of the now embedded link $L$ to be consistent with the page framings. This is achieved by adding a kink via a Reidermeister $1$ move and stabilizing at the new crossing. This changes the page framing by $\pm1,$ which will allow us to achieve any framing through this process. Although we may have stabilized several times, we still denote $N' = Y^+ \cup Y^-$ with $Y^+ \cap Y^- = F_{g,b}.$ This is pictured in Figure~\ref{F:Kink}.

\begin{figure}[ht!]
\centering{
	\labellist
		\pinlabel \resizebox{16pt}{4pt}{$\rightarrow$} at 132 28
		\pinlabel \resizebox{!}{6pt}{R1} at 132 37
		
		\pinlabel \resizebox{18pt}{4pt}{$\rightarrow$} at 281 28
		\pinlabel \resizebox{!}{6pt}{stabilize} at 280 37
	\endlabellist
\includegraphics[scale=.8]{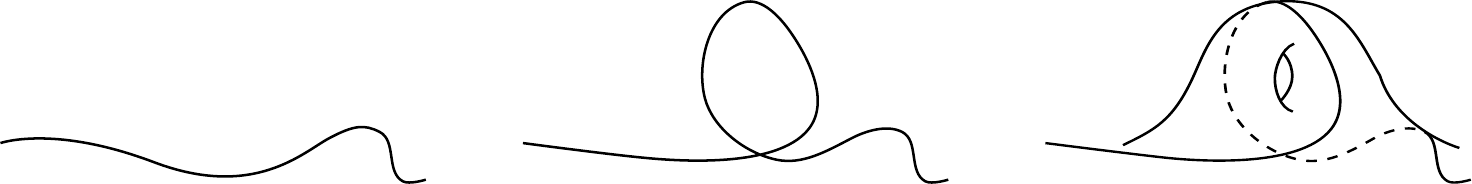}
}

\caption{Changing the Page Framing by $\pm1$\label{F:Kink}}
\end{figure}

Let us now define $X_2$ to be a collar neighborhood of $Y^+$ in the complement of $X_1$ together with the $2$--handles of $X.$ It is important that in $X_2,$ the $3$--dimensional $1$--handles of $Y^+$ give rise to $4$--dimensional $1$--handles of $X_2.$ Since we have just arranged for the attaching sphere of each $2$--handle to intersect the co-cores of the $1$--handles, there are $c=h_2$ canceling $1-2$ pairs in $X_2.$ (We can slide $1$--handles over one another to obtain a one-to-one correspondence between $2$--handles and co-cores of $1$--handles).

We can now verify that $X_2$ is a handlebody of the appropriate genus. First, we have that 
	$$\begin{array}{rl}
		Y^+	& \cong \left(F_{p, b} \times I\right) \natural H_1\cr
			& \cong \left(\natural^l S^1 \times D^2\right) \natural \left(\natural^{h_1 -m+1+c} S^1 \times D^2\right)\cr
			& \cong \natural^{l+h_1 -m+1+c} S^1 \times D^2,\\
	\end{array}$$
where we have taken the view that the $c$ stabilizations occur in $H_1 \cup H_2.$ Finally, since we have arranged for the $2$-handles to cancel $1$--handles, we obtain the desired result:
	$$\begin{array}{rl}
		X_2	& \cong Y^+ \times [0,\varepsilon] \underset{L}{\cup} 2-\textrm{handles}\cr
			& \cong \left(\natural^{l+h_1 -m+1+c} S^1 \times B^3\right) \underset{L}{\cup} 2-\textrm{handles}\cr
			& \cong \natural^{l+h_1 -m+1} S^1 \times B^3,\\
	\end{array}$$

Finally, define $X_3 := \overline{X\setminus (X_1 \cup X_2)}.$ Since $h_1=h_3$, ``standing on your head'' gives us $X_1 \cong X_3.$

As for the intersections, $X_1 \cap X_2 \cong Y^+$ and $X_1 \cap X_3 \cong Y^-$ by definition. To see $X_2 \cap X_3 \cong Y^+$ we exploit the one-to-one correspondence between link components and a subset of the co-cores of $1$--handles of $Y^+.$ Each surgery on $Y^+ \times \{1\}$ defined by a link component of $L\subset int(Y^+\times \{1\})$ can be done in a unique $S^1 \times D^2$ summand of $Y^+ \times \{1\} \cong \natural S^1 \times D^2.$ Such a surgery on $S^1 \times D^2$ results in $S^1 \times D^2$ and simply changes which curve bounds a disk. Thus, the surgery $3$--manifold $(Y^+ \times \{1\})_L$ is diffeomorphic to $Y^+.$ This completes the proof when $c=h_2.$

In the general case when $c > h_2,$ we add $c-h_2$ cancelling $1-2$ pairs and $c-h_2$ canceling $2-3$ pairs in the original handle decomposition of $X$ given by $f.$ After said perturbation of $f$, we modify the pieces accordingly. (Some modifications are required to make the pieces of the trisection diffeomorphic to each other. Other modifications are needed so that the attaching spheres of the $2$--handles are embedded in the trisections srurface.) We have $X'_1 = X_1 \natural^{c-h_2} S^1 \times B^3,$ whose boundary is similarly decomposed as $\partial X'_1 = M \cup \left(N' \#^{c-h_2}S^1 \times S^2\right).$ Additionally, we have a new generalized Heegaard splitting of $N' \#^{c-h_2}S^1 \times S^2$
	$$(Y^+ \#\mathcal{H}_1) \cup (Y^- \# \mathcal{H}_2),$$
where $\mathcal{H}_1 \cup \mathcal{H}_2$ is the standard genus $c-h_2$ Heegaard splitting of $\#^{c-h_2} S^1 \times S^2.$ That is, we have stabilized $Y^+ \cup Y^-$ $c-h_2$ times, once for each newly added $1$--handle. The new generalized Heegaard surface $F$ is of genus $p+h_1 - m+c - h_2$ and has $b$ boundary components. Moreover, the original link $L$ projects onto $F$ as it did before. However, we now have an additional $2(c - h_2)$ link components corresponding to the newly added $2$--handles. The half which correspond to the $1-2$ pair necessarily have the canceling intersection property discussed above. The half corresponding to the $2-3$ pairs project onto to $F$ as a $0$ framed unlink which bounds disks in $F.$ Stabilizing $Y^+ \underset{F}{\cup} Y^-$ for the last time(s) near each unknot allow us to slide the links into canceling position with the new $S^1 \times S^1$ summands of $F.$
\end{proof}

\end{section}

\begin{section}{The Gluing Theorem}\label{S:gluingtheorem}

We now restate the gluing theorem.
\begin{theorem}\label{T:gluing}
Let $X$ and $W$ be smooth, compact, connected oriented $4$--manifolds with non-empty boundary equipped with relative trisections $\tri_X$ and $\tri_W$ respectively. Let $B_X \subset \partial X$ be any collection of boundary components of $X$ and $f:B_X \hookrightarrow \partial W$ an injective, smooth map which respects the induced open book decompositions $\tri_X|_{B_X}$ and $\tri_W|_{f(B_Z)}.$ Then $f$ induces a trisection $\tri = \tri_X \underset{f}{\cup} \tri_W$ on $X \underset{f}{\cup} W.$
\end{theorem}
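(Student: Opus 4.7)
The plan is to take the three pieces of the candidate trisection to be
\[
V_i := X_i \underset{f_i}{\cup} W_i,
\]
where $f_i$ is the restriction of $f$ to the portion of $B_X$ that lies in $\partial_{Out} X_i$, and then to verify, in order, the three defining axioms of a relative trisection. Because $f$ respects the induced open book decompositions, it sends binding to binding and pages to pages, and in particular identifies the portion of $B_X$ lying in $\partial_{Out} X_i$ with the portion of $f(B_X)$ lying in $\partial_{Out} W_i$. This ensures each $f_i$ is well-defined and that the $V_i$ tile $X \underset{f}{\cup} W$ with pairwise intersections coming only from the original inner boundaries.

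The central step is verifying that each $V_i \cong \natural^{k} S^1 \times B^3$ for a common $k$. The $3$-dimensional gluing region between $X_i$ and $W_i$ is a disjoint union, indexed by the boundary components in $B_X$, of subsets of $\partial_{Out}$ of the form $(P_j \times I) \cup (\partial P_j \times I \times I)$. Each such piece is a $3$-dimensional $1$-handlebody, being diffeomorphic to $P_j \times I \cong \natural^{l_j} S^1 \times D^2$. Since $X_i$ and $W_i$ are $4$-dimensional $1$-handlebodies and we are gluing them along a $3$-dimensional $1$-handlebody in their boundaries, the result $V_i$ deformation retracts to a $1$-complex and therefore admits a handle decomposition with only $0$- and $1$-handles. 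A count of Euler characteristic and first homology rank then pins down a uniform value of $k$ across $i = 1, 2, 3$, so the three pieces are mutually diffeomorphic.

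For the intersection conditions, decompose $V_i \cap V_{i+1} = (X_i \cap X_{i+1}) \underset{f}{\cup} (W_i \cap W_{i+1})$, where each factor is a $Y^+$-type compression body from the respective trisection surface to the respective $P$. The open-book-respecting hypothesis ensures that the identification glues these compression bodies along pages and bindings in their $P$-ends, so the result is again a compression body of the form required by the definition: its trisection-surface end is the new trisection surface $V_1 \cap V_2 \cap V_3$, obtained by gluing the original trisection surfaces along the arcs of their boundaries corresponding to $B_X$, and its $P$-end is the disjoint union of the non-glued components of the two original $P$'s. An analogous discussion handles $V_i \cap V_{i-1} \cong Y^-$ and the fact that $\partial_{In} V_i = Y^+ \cup Y^-$ is the correct generalized Heegaard splitting.

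Finally, the outer boundary of $X \underset{f}{\cup} W$ consists of the components of $\partial X \setminus B_X$ and $\partial W \setminus f(B_X)$, on each of which the original induced open book decomposition from $\tri_X$ or $\tri_W$ is unchanged, so Lemma~\ref{L:inducedobd} holds on the glued manifold. The main obstacle I anticipate is rigorously establishing the $1$-handlebody claim of the second paragraph: the gluing region is a disjoint union of potentially high-genus handlebodies, so one must verify that no higher-index handles appear in the handle decomposition of $V_i$. This is cleanest via the retraction-to-a-$1$-complex argument indicated above, together with the observation that the gluing handlebody itself carries a relative Morse function with only $0$- and $1$-handles which can be spliced with those on $X_i$ and $W_i$.
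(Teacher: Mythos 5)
Your overall setup is the same as the paper's: define $V_i := X_i \cup_f W_i$ and check the axioms. But the step you yourself flag as the main obstacle is a genuine gap, and the heuristic you propose for closing it is based on a false general principle. Gluing two $4$--dimensional $1$--handlebodies along a $3$--dimensional $1$--handlebody in their boundaries does \emph{not} in general produce a $4$--manifold that deformation retracts to a $1$--complex: for instance, gluing two copies of $B^4 = D^2 \times D^2$ along the standard $S^1 \times D^2 = \partial D^2 \times D^2 \subset S^3$ yields $S^2 \times D^2$, which has nontrivial $H_2$ and is certainly not $\natural^k S^1 \times B^3$. So you cannot conclude from the handlebody structure of the gluing region alone that $V_i$ has only $0$-- and $1$--handles; some additional structure special to the trisection setup is required, and your proposal never identifies it. The same gap reappears in your treatment of $V_i \cap V_{i\pm 1}$, where you assert without argument that gluing two compression bodies along their $P$--ends and binding annuli yields a compression body of the right type.

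The missing ingredient, which is the heart of the paper's proof, is the observation that the product structure $X_i \cong C \times I$ and $W_i \cong B \times I$ (thickened compression bodies) is compatible with the gluing map $f$, since $f$ identifies $\partial_{Out}$ pieces level-by-level. This reduces the entire problem to a $3$--dimensional one: $V_i \cong (C \cup_{f} B) \times I =: A \times I$, so it suffices to show $A$ is a handlebody. The nontrivial $3$--dimensional fact is isolated as Lemma~\ref{L:sidefold}: folding $F_{g,b} \times I$ along $\partial F_{g,b} \times I$ (identifying $(x,t) \sim (x, 1-t)$) gives back something diffeomorphic to $F_{g,b} \times I$. This lemma is what shows each $(P_j \times I) \cup_f (Q_j \times I) \cong \natural^{l_j} S^1 \times D^2$, after which $A$ is built up by attaching the remaining $1$--handles of $C$ and $B$. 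Once $A$ is shown to be a handlebody, both the claim $V_i \cong \natural^k S^1 \times B^3$ and the compression-body structure of the pairwise intersections follow at once from $V_i = A \times I$. Without this reduction and the sidefold lemma, your argument does not close.
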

Note that if $B_X = \partial X = f^{-1}(\partial W)$, then the induced trisection is of a closed $4$--manifold. Otherwise, we have a relative trisection of a $4$--manifold with boundary. 
Schematics for the two possible gluings are given in Figure~\ref{F:gluings}. Note that the schematic on the right depicts the gluing of only one boundary component from each manifold, but should be thought of as ``not all components get glued.''

{\centering\begin{figure}[ht!]
\centering{
	\labellist
		\pinlabel {\rotatebox{-90}{\resizebox{16pt}{4pt}{$\leftrightarrow$}}} at 18 174
		\pinlabel $\cdots$ at 100 105
		
		\pinlabel {\rotatebox{90}{\resizebox{16pt}{4pt}{$\leftrightarrow$}}} at 182 174
		\pinlabel $\cdots$ at 98 232
		
		\pinlabel {\rotatebox{-90}{\resizebox{16pt}{4pt}{$\leftrightarrow$}}} at 509 174
		\pinlabel $\cdots$ at 594 108
		\pinlabel $\cdots$ at 434 238
		
		\pinlabel \resizebox{8pt}{!}{$Q_1$} at -15 205
		\pinlabel \resizebox{8pt}{!}{$P_1$} at -15 143
		\pinlabel \resizebox{11pt}{!}{$P_m$} at 225 143
		\pinlabel \resizebox{11pt}{!}{$Q_m$} at 225 205
		
		\pinlabel \resizebox{8pt}{!}{$Q_i$} at 564 205
		\pinlabel \resizebox{8pt}{!}{$P_i$} at 464 143
		
	\endlabellist
\includegraphics[scale=.4]{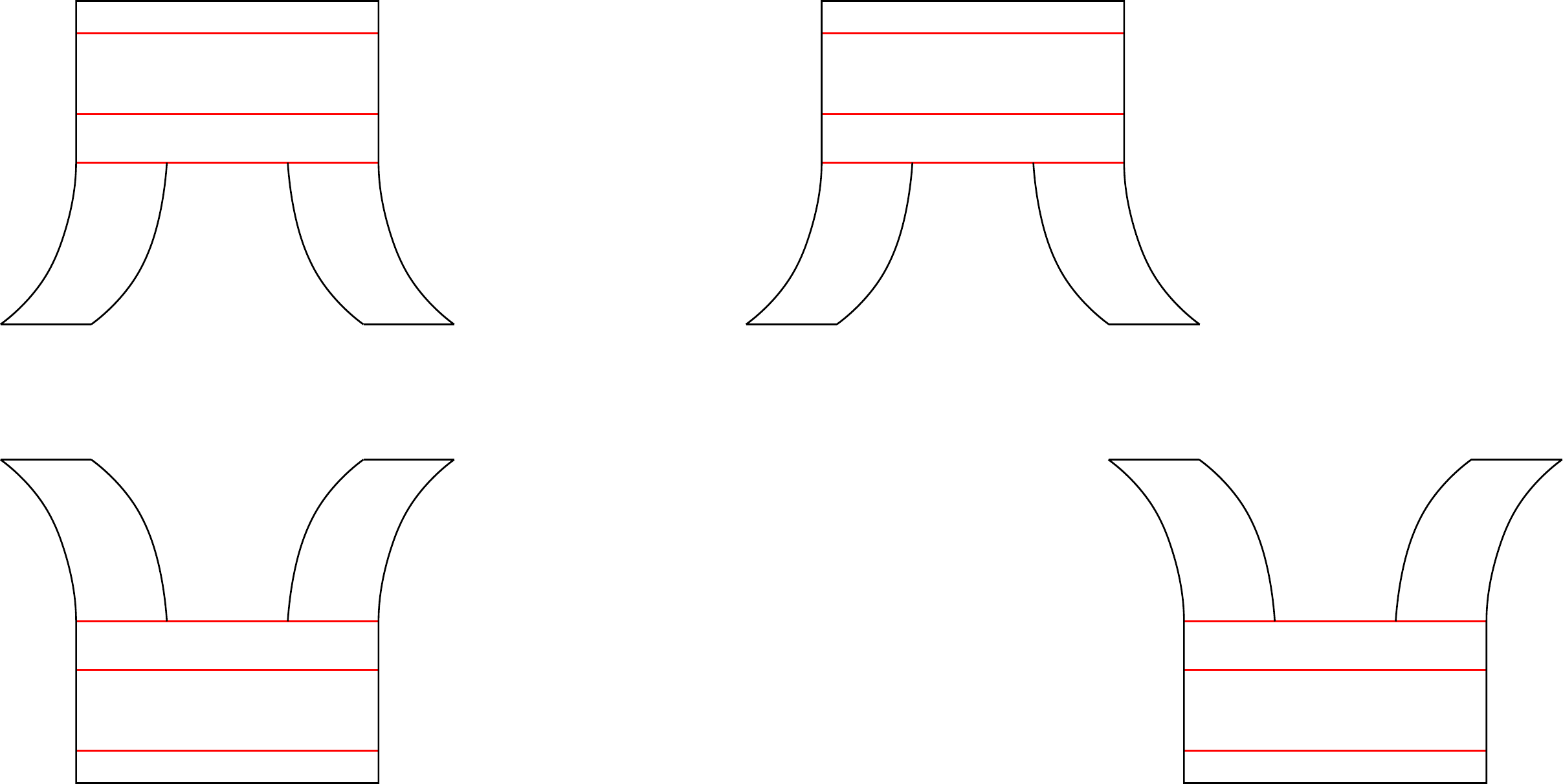}
}
	\caption{Gluing Relative Trisections \label{F:gluings}}
\end{figure}}

\begin{proof}
Let $P = \underset{j = 1}{\overset{m}{\sqcup}} P_j$ and $Q = \underset{j = 1}{\overset{\mu}{\sqcup}} Q_j$ be the pages of the open books induced by $\tri_X$ and $\tri_W$ respectively, where $P_j \cong F_{p_j, b_j}$ and $Q_j \cong F_{q_j, d_j}$ for each $j.$ Additionally, let $C$ and $B$ denote the compression bodies which give us the $X_i$'s and $W_i$'s (i.e., $C \times I \cong X_i$ and $B \times I \cong W_i$). Let $n$ and $\eta$ denote the number of $3$--dimensional $1$--handles in the constructions of $C$ and $B$ respectively.
	
We begin with the case $D =\partial X \cong \partial W$ (and thus $m=\mu$). Our gluing is defined in the natural way, by attaching $\partial_{Out} X_i$ to $\partial_{Out}W_i$ via $f.$ Our new pieces are given by
	$$Z_i := X_i \underset{f}{\cup} W_i  \;,$$
where $f(x)=x$ for all $x\in \partial_{Out} X_i.$ We wish to show that $Z_i$ is a $4$--dimensional handlebody, and that $Z_i \cap Z_{i+1}$ and $Z_i \cap Z_{i-1}$ are $3$--dimensional handlebodies. Since $X_i$ and $W_i$ are thickened compression bodies, we will reduce these to $3$--dimensional arguments.

To see $X_i \cap W_i$ is a handlebody, we require the following simple fact:

\begin{lemma}\label{L:sidefold}
Define the quotient space $M=F_{g,b} \times I / (x,t) = (x, 1-t)$ for every $x \in \partial F_{g,b}$ and every $t\in I.$ $M$  is diffeomorphic to $F_{g,b} \times I.$
\end{lemma}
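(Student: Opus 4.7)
The plan is to construct an explicit diffeomorphism by localizing near the fold locus $\partial F_{g,b} \times \{1/2\}$ and reducing to a two-dimensional model; away from a collar of $\partial F_{g,b}$ the quotient is tautologically a product, so only the collar region needs work.

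First I would fix a collar $U \cong \partial F_{g,b} \times [0, \epsilon)$ of $\partial F_{g,b}$ in $F_{g,b}$, so that the identification defining $M$ is supported on $\partial F_{g,b} \times \{0\} \times I \subset U \times I$. It then suffices to show that $V := (U \times I)/\sim$ is diffeomorphic to $U \times I$ via a diffeomorphism equal to the identity near the interior collar interface. Since the identification is constant in the $\partial F_{g,b}$-direction, this further reduces to the two-dimensional statement that $W := ([0, \epsilon) \times I)/\big((0, t) \sim (0, 1 - t)\big)$ is diffeomorphic to $[0, \epsilon) \times I$ relative to the right edge.

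For this two-dimensional claim, I would use the complex squaring map $\Psi : [0, \epsilon) \times I \to \mathbb{R}^2$ given by $\Psi(s, t) = (s + i(t - \tfrac{1}{2}))^2$. A direct check shows that $\Psi(s_1, t_1) = \Psi(s_2, t_2)$ exactly when $(s_1, t_1) = (s_2, t_2)$ or $s_1 = s_2 = 0$ with $t_2 = 1 - t_1$, so $\Psi$ descends to a smooth injective map $\bar\Psi : W \to \mathbb{R}^2$ whose image is a planar region bounded by three smooth parabolic arcs with the fold point $[(0, 1/2)]$ lying in its interior. This realizes $W$ as a smooth $2$-manifold with piecewise smooth boundary; a standard reparametrization straightening the parabolic boundary then produces a diffeomorphism $W \to [0, \epsilon) \times I$, which by a cut-off argument can be arranged to equal the identity on a neighborhood of $\{s_0\} \times I$ for some $s_0 < \epsilon$.

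Taking the product with $\partial F_{g,b}$ gives $V \cong U \times I$ compatible with the collar interface, and gluing with the identity on the complement $(F_{g,b} \setminus U) \times I$ assembles into the desired diffeomorphism $M \cong F_{g,b} \times I$. The main obstacle is the collar-compatibility of the two-dimensional reparametrization: ensuring the map from the parabolic region to the rectangle can be chosen to equal the identity near the gluing interface is a routine but somewhat delicate smoothing/isotopy argument, complicated by the fact that $\Psi$ has a critical point at the fold and that the image region has genuine corners at the non-fold end.
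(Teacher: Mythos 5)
Your proof is correct and is a careful analytic elaboration of the folding idea that the paper conveys only via Figure~\ref{F:sidefold} (its entire stated proof is a reference to that figure, with the details left to the reader). The complex squaring model $\Psi(s,t)=(s+i(t-1/2))^2$ is exactly the right local normal form for the fold, and the smoothing/cut-off issues you flag at the end are genuine but routine; note in fact that the images of the $t=0$ and $t=1$ edges lie on a single parabola $x=y^2-1/4$, so the boundary of $\bar\Psi(W)$ is actually smooth at the former corner $[(0,0)]=[(0,1)]$, and the only true corners are those inherited at the $s=s_0$ interface, where you are composing with $\Psi^{-1}$ anyway.
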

The main idea of the proof lies in Figure~\ref{F:sidefold}. We leave the details to the reader. 

\begin{centering}
\begin{figure}[h!]
\centering{
	\labellist
		\pinlabel {$F_{g,b} \times I$} [l] at 368 419
		\pinlabel {$\overset{\textit{fold}}{\longrightarrow}$} at 189 234
		\pinlabel {$M$} at 342 72
	\endlabellist
	\includegraphics[scale=.4]{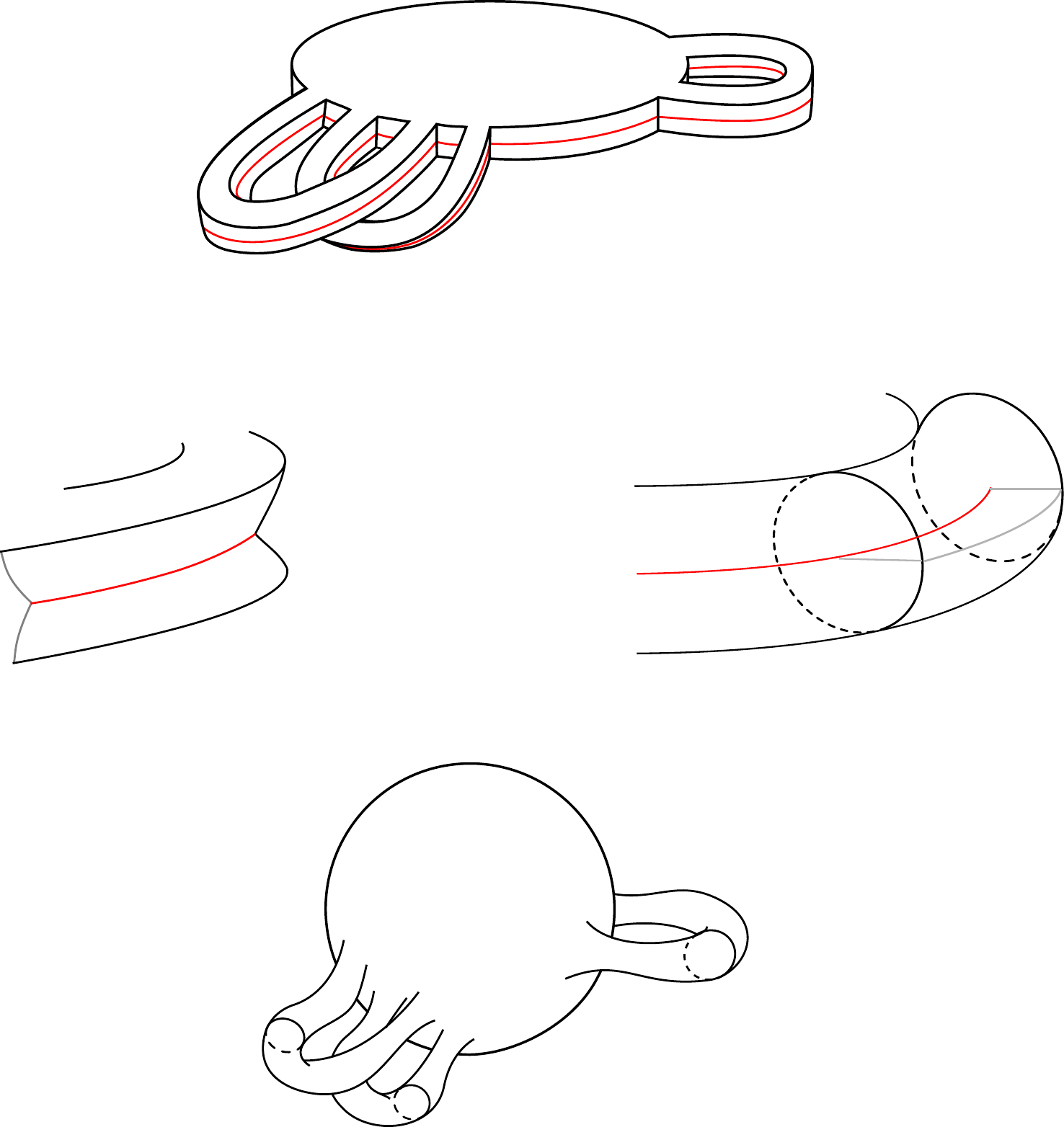}
}
	\caption{Proof of Lemma~\ref{L:sidefold} \label{F:sidefold}}
\end{figure}\end{centering}

Lemma~\ref{L:sidefold} gives us $(P_j\times I) \underset{f}{\cup} (Q_j\times I)\cong \natural^{l_j} S^1 \times D^2.$ We will finish constructing $A:= C \cup B/ \sim$ by attaching the $1$--handles of $B$ and $C$ to $(P_j\times I) \underset{f}{\cup} (Q_j\times I)$. Since the $1$--handles in the construction of $B$ and $C$ are attached along the interior of level sets, the gluing and the attachment of the $1$--handles can be done independently. The $m-1$ $1$--handles coming from $C$ connect the $\natural^{l_j} S^1 \times D^2$ to one another, yielding $\natural^l S^1 \times D^2.$ The $m-1$ $1$--handles coming from $B$ (which connected the $Q_i$'s in $B$) now increase the genus of the handlebody, leaving us with $\natural^{l+m-1}S^1\times D^2.$ We then proceed to attach the remaining $n-(m-1)$ $1$--handles from $C$ and the remaining $\eta-(m-1)$ $1$--handles from $B$. Thus, $A\cong \natural^k S^1\times D^2$, where $k=l+n+\eta-(m-1).$ By definition of our gluing, we have $A = \partial_{In} X_i \cup \partial_{In}W_i/ \sim.$ Thus, $A = Z_i \cap Z_j$ is a $3$--dimensional handlebody of genus $k = l + n-(m-1) + \eta -(\mu -1).$ Noting that $Z_i = A \times I$ gives us the desired result.

The more difficult case is when we wish to result in a relative trisection. For simplicity, we will prove this case when gluing $X$ and $W$ along a single boundary component given by a map which takes $P_1$ to $Q_1$ as in the right side of Figure~\ref{F:gluings}. The argument easily generalizes to multiple boundary components.

Let us view $B$ and $C$ as being constructed in reverse as mentioned in the previous section. Again, the fact that the $1$--handles in these constructions are attached to the interiors of $P$ and $Q$ allows us to glue $P_1 \times I$ to $Q_1\times I$ before connecting components of the compression bodies. In other words, if we denote $M = (P_1 \times I \underset{f}{\cup} Q_1 \times I),$ then $A = C \cup B / \sim$ can be constructed by attaching $1$--handles to 
	$$\left(\underset{i=2}{\overset{m}{\bigsqcup}}(P_i \times I)\right) \sqcup M \sqcup \left(\underset{i=2}{\overset{\mu}{\bigsqcup}}(Q_i \times I)\right).$$
Lemma~\ref{L:sidefold} again gives us $M \cong \natural^{l_1}S^1\times D^2$ which can be constructed by attaching $l_1$ $3$--dimensional $1$--handles to $B^3.$ Thus $A$ can be constructed as follows: Attach $m-2$ $1$--handles to $P\times I$ and $\mu - 2$ $1$--handles to $Q\times I$ so that each are connected. We then attach these components to $B^3$ (the $0$-handle of M). Note that these two $1$--handles giving us a connected manifold are the $1$--handles which connect $P_1 \times I$ and $Q_1 \times I$ to the remaining thickened pages in $P\times I$ and $Q\times I$ respectively and do not increase the genus. To complete the construction, we attach $l_1$ $1$--handles, coming from the construction of $M.$ This, gives us a compression body $A$ whose ``smaller genus'' end (pages of open book) is $\underset{j=2}{\overset{m}{\cup}}P_j \cup \underset{j=2}{\overset{\mu}{\cup}}Q_j$ and ``larger genus'' end is a surface of genus
	\begin{align}\label{eq:gluedgenus}
	\sum_{j=2}^mp_j + \sum_{j=2}^{\mu}q_j + (n-m+1) +(\eta-\mu+1) + \left(2p_1 + b_1-1\right)
	\end{align}
with $(b-b_1) + (d-d_1)$ boundary components.

Although the new trisection genus given by (\ref{eq:gluedgenus}) is quite involved, the idea behind the calculation is quite simple. If $\tri_X$ and $\tri_W$ have relative trisection surfaces $F_X$ and $F_W$ respictively, we obtain the new trisection surface $F_Z$ by identifying the boundaries of $F_X$ and $F_W$ corresponding to the open books $(P_1, \phi_1)$ and $(Q_1, \psi_1)$ as prescribed by $f$.

When gluing trisections together along $s>1$ boundary components, we simply modify our calculation of the genus of $A$ to account for the fact that $s-1$ of the $1$--handles coming from $B$ now increase the number of $S^1 \times D^2$ summands. In general, we have $k = \underset{i=1}{\overset{s}{\Sigma}}l_{j_i} + n - (m-1) + \eta - (\mu - 1) +(s - 1).$
\end{proof}
\end{section}


\begin{section}{Relative Stabilizations\label{S:relstab}}
In this section we describe a stabilization of a relative trisections which is significantly different than that given by Gay and Kirby in that it changes the boundary data of a single boundary component and increases the trisection genus by either one or two. The effect such a \emph{relative stabilization} has on the open book decomposition of the chosen boundary component of $\partial X$ is a Hopf stabilization; both positive and negative stabilizations can be achieved.

Given a relative trisection $\tri$ of $X$, consider a corresponding function $f:X \rightarrow D^2$ as constructed in Theorem~\ref{T:4dexistence} (without identifying $D^2$ with $I\times I$ and projecting onto the first factor). We begin by introducing a Lefschetz singularity as in Section~\ref{S:Background}. In the case of multiple boundary components, we must choose the boundary component to which we attach the one handle, taking care that the attaching sphere of the canceling $1$--handle is contained in a single boundary component $M_i \subset \partial X$ with open book $(P_i, \phi_i).$ (Otherwise, we would be changing our $4$--manifold by connecting boundary components.) We attach the $2$--handle just as before, in the neighborhood of a regular value $y_0$, creating a singularity locally modeled by $(u,v)\mapsto u^2 + v^2.$ The left half of Figure~\ref{F:wrinkle} shows a neighborhood of the singularity and a neighborhood of the vanishing cycle.

\begin{remark}
Notice that the attaching spheres of the $1$--handle can be attached to the same binding component or to different binding components. We discuss this difference below.
\end{remark}

Let $Z_f \subset D^2$ denote the original critical values of $f$ before introducing the canceling pair. This is a codimension $1$ set which is given by indefinite folds with finitely many cusps and crossings. We wish to show that we can ``move $x_0$ past'' all but finitely many points of $Z_g.$ That is, choosing a different regular fiber at which to attach the singular $2$--handle yields an isotopic function on $X.$ Without loss of generality, assume $0 \notin Z_f$ and that $f^{-1}(0)$ is a fiber whose genus is maximal amongst regular fibers (i.e., $f^{-1}(0) \cong F_{g,b}$ is the trisection surface of genus $g$ with $b$ boundary components).

Let $\gamma: [0,1] \rightarrow D^2$ be a smoothly embedded path from $\gamma({\varepsilon})=\vec{0}$ to $\gamma(1-\varepsilon)=y_0$ such that: 
\begin{itemize}
	\item[(1)] $\gamma$ intersects $Z_f$ at points $p_1, \ldots, p_n \in D^2,$ none of which are cusps or crossings of $Z_f,$
	\item[(2)] if we denote $p_i = \gamma(t_i),$ then $t_i < t_{i+1}$ for every $i,$
	\item[(3)] the genus of the bounded fiber $f^{-1}(\gamma(t_i - \varepsilon))$ is one less than that of $f^{-1}(\gamma(t_i + \varepsilon).$ 
\end{itemize}
This gives us a path as in Figure~\ref{F:pushsing}. (The conditions above are simply to ensure that $\gamma$ is a path which does not intersect the same folds of $Z_f$ more than once.) Let $M_\gamma = f^{-1}(\gamma),$ then $\gamma^{-1} \circ f|_{M_\gamma}: M_\gamma \rightarrow [0,1]$ is a Morse function such that each $f^{-1}(p_i)$ and $f^{-1}(y_0)$ are index--$2$ critical points. It is a standard result in Morse theory that critical points of the same index can be reordered. That is, we can modify $f$ so that the index--$2$ critical point corresponding to the newly created Lefschetz singularity is attached to the fiber $f^{-1}(0).$ Finally, since it was arranged that $\gamma$ missed the cusps and crossings of $Z_f,$ we can extend the above construction to a neighborhood $N \subset D^2$ of $\gamma,$ which gives a perturbed map $f:X \rightarrow D^2$ with a single Lefschetz singularity with critical value $\vec{0}.$

\begin{figure}[ht!]
\labellist
	\pinlabel $Z_f$ at 75 122
	\pinlabel \resizebox{.8cm}{!}{$\longrightarrow$} at 195 65
	\pinlabel $\gamma$  at 373 92
\endlabellist
\centering
\resizebox{4.5in}{!}{
\includegraphics[scale=.6]{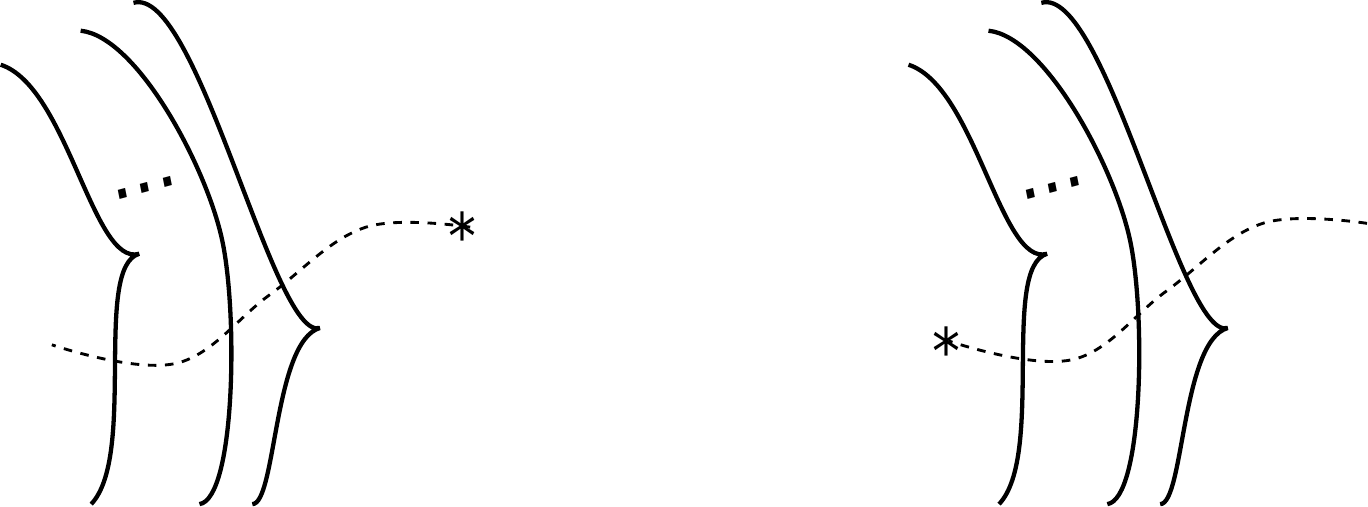}
}
	\caption{Preparing a Lefschetz Singularity for Wrinkling}
	\label{F:pushsing}
\end{figure}

Let us now perturb $f$ in a neighborhood of $\vec{0}$ via
	$$f_t(u,v) = u^2 +v^2 +tRe(u),$$
or in real coordinates
	$$f_t(s,x,y,z) = (s^2-x^2+y^2-z^2+ts,2sx _2yz).$$
For $t>0$ the critical values of $f_t$ are given by
	$$\Gamma_t:= \{(s,x,y,z) \in \mathbb{R}^4| x^2 +s^2 +\frac{st}{2} = 0, \; y=z=0\},$$
which defines a \emph{triple cuspoid} pictured in Figure~\ref{F:wrinkle}. In \cite{lekili}, Lekili shows that for $y_1, y_2 \in D^2$, where $y_1$ is in the interior of the triple cuspoid and $y_2$ is in the exterior, then the genus of $f_1^{-1}(y_1)$ is one greater than that of $f_1^{-1}(y_2).$ This perturbation is known as \emph{wrinkling a Lefschetz singularity.} The triple cuspoid can be thought of as a Cerf graphic, where each cusp  gives a canceling $1$--$2$ pair and crossing a fold into the bounded region corresponds to attaching a $1$--handle. Lekili further shows that crossing a fold into the exterior of the cuspoid corresponds to attaching a $2$--handle along a curve in the the central fiber. In Figure~\ref{F:wrinkle}, the colors of the attaching spheres correspond to the colors of $l_{ij} \subset D^2$ so as to indicate the isotopy class of curves determined by which fold of $\Gamma_t$ each line crosses. Notice that wrinkling is a local modification which is done on the interior of $X$. Thus, the action of wrinkling does not modify any boundary data.

\begin{figure}[th!]\centering
\hspace{.7in}
\resizebox{3.5in}{!}{%
\begin{tikzpicture}
	\draw [thick] (-8.7,.4) arc (180:360:.7 and .2);
	\draw [thick, dashed] (-7.31,.4) arc (0:180:.69 and .2);
	\draw [thick] (-8, 2.4) ellipse (.7cm and .2cm);
	
	\draw[thick] (-8.4, 1.4) arc (180:360:.4 and .1);
	\draw[thick, dashed] (-7.6, 1.4) arc (0:180:.4 and .1);
	
	\draw [thick] (-8.7, .4) .. controls (-8.3, 1.15) and (-8.3, 1.65) .. (-8.7, 2.4);
	\draw [thick] (-7.3, .4) .. controls (-7.7, 1.15) and (-7.7, 1.65) .. (-7.3, 2.4);

	\draw [thick] (-.7,0) arc (180:360:.7 and .2);
	\draw [thick, dashed] (.69,0) arc (0:180:.69 and .2);
	\draw [thick] (0, 3) ellipse (.7cm and .2cm);
	\draw [thick] (-.7, 0) .. controls (-.6, .3) and (-1.3, 1) .. (-1.27, 1.5);
	\draw [thick] (-1.27, 1.5) .. controls (-1.3, 2) and (-.6, 2.7) .. (-.7, 3);
	\draw [thick] (.7, 0) .. controls (.6, .3) and (1.3, 1) .. (1.27, 1.5);
	\draw [thick] (1.27, 1.5) .. controls (1.3, 2) and (.6, 2.7) .. (.7, 3);
	\draw [thick] (.1, 1) .. controls (-.25, 1.25) and (-.25, 1.75).. (.1, 2);
	\draw [thick] (0,1.1) .. controls (.25, 1.25) and (.25, 1.75).. (0,1.9);
	
	\draw [thick, red] (-1.27, 1.5) .. controls (-1, 1.4) and (-.44, 1.4) .. (-.172, 1.5);
	\draw [dashed, red] (-1.27, 1.5) .. controls (-1, 1.6) and (-.44, 1.6) .. (-.172, 1.5);
	
	\draw [thick, blue] (0, 1.5) ellipse (.6cm and 1cm);
	
	\draw [thick, green] (0, .3) .. controls (-.4,.28) and (-1.2, 1) .. (0, 1.9);
	\draw [thick,dashed, green] (1.16, 1.9) .. controls (.7, 3) and (-.8,2.3).. (0, 1.9);
	\draw [thick, green] (1.16, 1.9) .. controls (1, .3) and (.1, .28) .. (0, .3);
	
\end{tikzpicture}
}\newline
\resizebox{3.5in}{!}{%
\hspace{.4in}
\begin{tikzpicture}	
	\node at (-7, .3) {*};
	\node at (-7, .1) {$x_i$};
	\node at (-4,3) {$\xrightarrow{wrinkle}$};
	
	\draw [thick]plot [smooth, tension = 1] coordinates {(-1.1,.3) (.1, 1.7142) (.5,3.1)};
	\draw [thick](-8, 1) -- (-8.5, 3.3);
	\draw[thick] ([shift=(30:1cm)]-2,.866) arc (0:-60:2cm);
	\draw[thick] ([shift=(30:1cm)]-2,.866) arc (180:240:2cm);
	\draw[thick] ([shift=(30:1cm)]-1,-.866) arc (60:120:2cm); 
	
	\draw (-7, .3) circle (2cm);
	\draw (-1.1, .3) circle (2cm);
	\draw [thick, blue](-1.1, .3) -- (-1.1,-1.7);
	\draw [thick, green](-1.1, .3) -- (.632,1.3);
	\draw [thick, red](-1.1, .3) -- (-2.832,1.3);
	
\end{tikzpicture}
}
 	\caption{Wrinkling a Lefschetz Singularity}
	\label{F:wrinkle}
\end{figure}
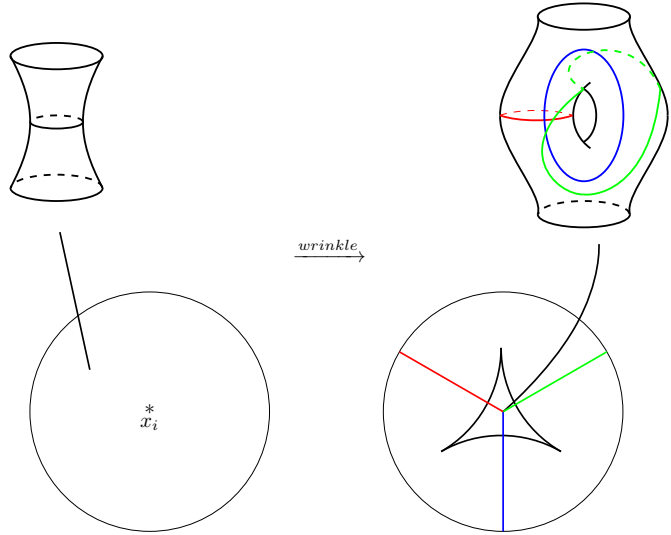
All that remains is to show that the resulting function does in fact result in a trisected $4$--manifold. Let $l_{ij}$ denote the image under the original map $f$ of $X_i \cap X_j$ fixed as a subset of $D^2.$ Moreover, let us arrange for the image of $X_1\cap X_2 \cap X_3$ to be $\vec{0}.$ For sufficiently small $t>0$, we may assume that $\Gamma_t$ is disjoint from $\Gamma_f$ and that each $l_{ij}$ do not intersect $\Gamma_t$ at a cusp or crossing. If we now choose an identification of $D^2$ with $I \times I$, we can proceed to construct a trisection $\tri'$ of $X$ as in Theorem~\ref{T:4dexistence}. 

\begin{definition}\label{D:relstab}
The above process is a \emph{stabilization} of the trisection $\tri$ relative to the open book $(P_i, \phi_i).$
\end{definition}

By construction, a relative stabilization of $\tri$ induces a stabilization of the open book decomposition $\tri|_{M} = (P_i, \phi_i).$ (Recall that this has the effect of plumbing a Hopf band onto the pages and the monodromy gets composed with a Dehn twist along the vanishing cycle.) If the feet of the $1$--handle are contained in a single binding component, then the plumbing increases the number of boundary components of the page by one and fixes the genus. If different binding components are involved, then the plumbing decreases the number of boundary components of the page by one and increases the genus by one. As mentioned before, wrinkling the Lefschetz singularity increases the genus of the central fiber by one.

Let us now summarize stabilizations of $\tri_X$ relative to $(P_i, \phi_i),$ resulting in a new trisection $\tri'_X.$
\begin{itemize}
	\item $\tri'$ admits a decomposition of $X= X'_1 \cup X'_2 \cup X'_3,$ where $X'_i \cong X_i \natural (S^1 \times B^3).$
	\item If $F_{g,b}$ is the trisection surface of $\tri,$ then the trisection surface of $\tri'$ is either $F_{g+1, b+1}$ or $F_{g+2, b-1}.$
	\item The induced open book $\tri'|_M = (P'_i, \phi'_i)$ is a Hopf stabilization of $(P_i, \phi_i).$
\end{itemize}

A complete uniqueness theorem for relative trisections would require a list of stabilizations which allow us to make any two trisection of a fixed $4$--manifold isotopic. It is unclear as to whether or not such a list exists (see Section~\ref{S:Conclusion}, Question~\ref{Q:DoubleTwist}). However, Gay and Kirby gave the following uniqueness statement:
\begin{theorem}[\cite{gay}]
	Any two relative trisections of $X$ which induce the same open books on $\partial X$ can be made isotopic after a finite number of interior stabilizations of both.
\end{theorem}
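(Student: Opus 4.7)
The strategy is to mirror the Gay--Kirby proof of Theorem~\ref{T:closeduniqueness} in the closed case, but executed relative to the induced boundary open books. Recall from the proof of Theorem~\ref{T:4dexistence} that every relative trisection $\tri$ of $X$ is encoded by a generic smooth map $f_{\tri}\colon X\to D^2$ whose critical image, together with the chosen identification $D^2\cong I\times I$, records the three pieces of the trisection; near $\partial X$ this map is prescribed entirely by the induced open book decomposition.

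First I would convert $\tri_0$ and $\tri_1$ to such maps $f_0,f_1\colon X\to D^2$. Because by hypothesis they induce the same open book $(P,\phi)$ on each component of $\partial X$, one can arrange---by an ambient isotopy supported in a collar of $\partial X$---that $f_0\equiv f_1$ on a neighborhood of $\partial X$ and that the trisection surfaces of $\tri_0$ and $\tri_1$ agree there. This reduces the problem to one entirely in the interior of $X$.

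Next I would apply a relative form of Cerf theory to connect $f_0$ to $f_1$ through a generic one-parameter family $\{f_t\}_{t\in[0,1]}$ of maps $X\to D^2$ which is constant on the chosen collar. A generic such family meets the codimension-one strata in the space of smooth maps in only finitely many times, producing a Cerf graphic whose evolution is governed by a standard list of local moves: births and deaths of folds and Lefschetz-type critical points, swallowtails, and transverse crossings of fold arcs. Since $f_t$ is constant near $\partial X$, every move occurs in the interior of $X$, and the induced open book on each boundary component is unchanged for all $t$.

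The main step, and the one I expect to be the crux, is showing that each move in the Cerf graphic either corresponds to an ambient isotopy of the trisection or can be absorbed by performing an interior stabilization on $\tri_0$ or on $\tri_1$. This is exactly the local combinatorial analysis that Gay and Kirby carry out in the closed case, and it transports essentially verbatim because each move is supported in a small ball disjoint from $\partial X$; the fact that the trisection surface now has boundary plays no role in the local models. Performing the corresponding interior stabilizations on $\tri_0$ and $\tri_1$ produces $\tri_0'$ and $\tri_1'$ realized by isotopic maps, hence themselves isotopic. The principal obstacle is establishing the relative Cerf-theoretic transversality used above---namely, that generic paths of maps fixed on a collar of $\partial X$ can be arranged to meet the bad strata only at the standard codimension-one models---while keeping the boundary fibration pointwise fixed. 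Once this is in place, the interior analysis is identical to the closed case.
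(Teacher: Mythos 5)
This theorem is stated in the paper as a direct citation to Gay and Kirby; the paper gives no proof of it, so there is nothing internal to compare against. Your sketch does accurately reflect the strategy in the cited work: Gay and Kirby encode (relative) trisections by Morse $2$--functions $X\to D^2$ whose behavior near $\partial X$ is dictated by the open book, join two such maps agreeing on a collar by a generic homotopy rel boundary, and match the resulting Cerf-graphic moves in the interior to isotopies and interior stabilizations. You have also correctly located where the real work lies: establishing the relative transversality for homotopies of Morse $2$--functions fixed near $\partial X$, and the move-by-move analysis of the Cerf graphic (births/deaths, swallowtails, fold crossings, and the interaction of these with the reference arcs $l_{ij}$ that cut out the three pieces). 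As written this is a faithful outline of the argument rather than a proof; the two pieces you flag as obstacles are exactly the technical content that Gay and Kirby supply, so the proposal should be read as a correct plan whose substance is deferred to the source.
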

Now that we have relative stabilizations at our disposal, this statement can be strengthened.
\begin{theorem}\label{T:relativeuniqueness}
	If $\tri_1$ and $\tri_2$ are relative trisections of $X$ such that their induced open books on $\partial X$ can be made isotopic after Hopf stabilizations, then the two relative trisections can be made isotopic after a finite number of interior and relative stabilizations of each.
\end{theorem}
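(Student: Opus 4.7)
The plan is to reduce Theorem~\ref{T:relativeuniqueness} to the Gay--Kirby uniqueness statement stated immediately above it, by using relative stabilizations to arrange that the induced open books on $\partial X$ coincide. By hypothesis there exist finite sequences of Hopf stabilizations taking $\tri_1|_{\partial X}$ and $\tri_2|_{\partial X}$ to a common open book $(P, \phi)$ up to isotopy. I will realize each such Hopf stabilization as the boundary effect of a single relative stabilization from Section~\ref{S:relstab}, producing new relative trisections $\tri'_1$ and $\tri'_2$ whose induced open books on $\partial X$ are isotopic; at that point the hypothesis of the previous uniqueness theorem is satisfied and the conclusion follows after finitely many interior stabilizations of each.

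First I would verify that every Hopf stabilization appearing in either sequence can be matched by a single relative stabilization of the corresponding trisection. Recall from Definition~\ref{D:relstab} and the summary preceding Theorem~\ref{T:relativeuniqueness} that a relative stabilization has the following effects on the induced open book: it performs a Hopf stabilization whose sign can be chosen to be positive or negative (via the choice of local model for the wrinkled Lefschetz singularity), and whose plumbing arc may be chosen either inside a single binding component (increasing $b$ by $1$, fixing the page genus) or across two distinct binding components (increasing the genus by $1$, decreasing $b$ by $1$). These are exactly the two possible types of Hopf stabilization, so any Hopf stabilization of $\tri_i|_{\partial X}$ can be realized.

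Applying relative stabilizations step-by-step to $\tri_1$ following the prescribed sequence of Hopf stabilizations yields $\tri'_1$ whose induced open book is isotopic to $(P, \phi)$; similarly one obtains $\tri'_2$. Using an ambient isotopy of $\partial X$ extended to a collar and then to $X$, I would further isotope $\tri'_2$ so that it induces literally the same open book on $\partial X$ as $\tri'_1$. Applying the Gay--Kirby uniqueness theorem to $\tri'_1$ and $\tri'_2$ then produces the desired isotopy after finitely many interior stabilizations of each, completing the argument.

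The main obstacle is the realization step: showing that the relative stabilization construction in Section~\ref{S:relstab} is flexible enough to reproduce \emph{any} Hopf stabilization of $(P_i, \phi_i)$ with a prescribed isotopy class of stabilizing arc and prescribed sign. The sign is handled by choosing the orientation of the local model of the Lefschetz singularity, and the location of the stabilizing arc is controlled by the choice of regular value $y_0 \in D^2$ and the path $\gamma$ used to push the new singularity toward the central fiber; one needs to check that varying these choices realizes every isotopy class of properly embedded arc on the page (and every choice of one- versus two-binding-component plumbing). Modulo this verification, which is essentially a statement about the flexibility of the wrinkling construction, the theorem follows by concatenating the relative-stabilization argument with the existing uniqueness result.
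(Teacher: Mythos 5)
Your proposal matches the paper's proof: both reduce to the Gay--Kirby uniqueness statement by first performing relative stabilizations on $\tri_1$ and $\tri_2$ to realize the prescribed sequences of Hopf stabilizations, using the freedom to choose the plumbing arc and the sign of the Dehn twist through the choice of vanishing cycle for the Lefschetz singularity underlying each relative stabilization. The flexibility you flag as the ``main obstacle'' is precisely what the paper also asserts, somewhat informally, when it says relative stabilizations factor through Lefschetz singularities and that one has the liberty to choose the vanishing cycles.
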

\begin{proof}
By assumption, we can perform relative stabilizations of $\tri_1$ and $\tri_2$ so that they induce equivalent open books on $\partial X.$ Since relative stabilizations, in some sense, ``factor through'' Lefschetz singularities, we have the liberty to choose the vanishing cycles of the associated singularities, thus allowing us to ensure that the appropriate Hopf stabilizations are induced. We now call upon the uniqueness statement of Gay and Kirby to finish the proof.
\end{proof}

Notice that relative stabilizations behave well with gluings due to the induced Hopf stabilization on the open book. More precisely, 
\begin{lemma}
Suppose $\tri_Z$ and $\tri_W$ relative trisections of $Z$ and $W$ with induced open books $(P, \phi)$ and $(Q, \psi)$ respectively. Let $f: \underset{r=1}{\overset{n}{\bigsqcup}}M_{\phi_{i_r}} \rightarrow \underset{r=1}{\overset{n}{\bigsqcup}}M_{\psi_{j_r}}$ be an orientation reversing diffeomorphism respecting the induced open books on each boundary component, where $\{i_1, \ldots, i_n\}, \{j_1, \ldots, j_n\} \subset \{1, \ldots, m\}.$ If $\tri'_Z$ and $\tri'_W$ are relative stabilizations of $\tri_Z$ and $\tri_W$ relative to $(P_{i_1}, \phi_{i_1})$ and $(Q_{j_1}, \psi_{j_1})$ respectively such that the new induced open books remain compatible, then $f$ can be extended so as to induce the trisection $\tri'_Z \underset{f}{\cup} \tri'_W$ on $Z \underset{f}{\cup} W.$
\end{lemma}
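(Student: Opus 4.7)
The plan is to reduce the statement to a direct application of the Gluing Theorem (Theorem~\ref{T:Gluing}). First I would observe that, by Definition~\ref{D:relstab} and the construction preceding it, $\tri'_Z$ and $\tri'_W$ are bona fide relative trisections of the \emph{same} $4$-manifolds $Z$ and $W$: the relative stabilization procedure (a canceling $1$--$2$ handle pair attachment followed by a wrinkling of the resulting Lefschetz singularity) alters only the trisection, not the diffeomorphism type of the underlying manifold. Because relative stabilization is a local modification supported in a neighborhood of a single chosen binding component of a single boundary component, the induced open books on every boundary component other than $M_{\phi_{i_1}}$ (resp.\ $M_{\psi_{j_1}}$) are left unchanged in passing from $\tri_Z$ to $\tri'_Z$ (resp.\ from $\tri_W$ to $\tri'_W$). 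In particular, the underlying topological glued manifold $Z \cup_f W$ is unchanged.

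Next I would build the extension $f'$ piecewise. On the unaffected boundary components $\bigsqcup_{r \geq 2} M_{\phi_{i_r}}$, set $f' = f$; this still respects the induced open books since those components are untouched by the stabilization. On the stabilized component $M_{\phi'_{i_1}}$, invoke the hypothesis that the Hopf-stabilized open books $(P'_{i_1}, \phi'_{i_1})$ and $(Q'_{j_1}, \psi'_{j_1})$ remain compatible to choose an orientation-reversing diffeomorphism $f' \colon M_{\phi'_{i_1}} \to M_{\psi'_{j_1}}$ respecting the new open books. Applying Theorem~\ref{T:Gluing} to the data $(\tri'_Z, \tri'_W, f')$ then produces the desired trisection $\tri'_Z \cup_{f'} \tri'_W$ on $Z \cup_f W$.

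The main obstacle is verifying that this $f'$ can be chosen to genuinely extend $f$, rather than merely to be an unrelated diffeomorphism of the Hopf-stabilized open books. Here one uses the locality of Hopf stabilization: the plumbing of a Hopf band occurs in a neighborhood of a chosen arc in the page, and since $f$ already respects the unstabilized open book $(P_{i_1}, \phi_{i_1})$, one can choose the plumbing arcs on the two sides to correspond under $f$ and then extend $f$ naturally across the plumbed Hopf bands. Once this compatibility is established, the conclusion follows from a direct appeal to Theorem~\ref{T:Gluing}, and the lemma is proved.
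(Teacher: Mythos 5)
The paper explicitly states ``The proof of the lemma is straight forward and left to the reader,'' so there is no paper proof to compare against; your reduction to Theorem~\ref{T:Gluing}, using the facts that relative stabilization preserves the diffeomorphism type of $Z$ and $W$, is local to a single boundary component, and that the Hopf plumbing arcs on the two sides can be chosen to correspond under $f$ so that the extension $f'$ genuinely restricts to $f$, is precisely the intended ``straightforward'' argument. Your proposal is correct.
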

The proof of the lemma is straight forward and left to the reader. 
\end{section}

\begin{section}{Final Remarks \label{S:Conclusion}}
As mentioned in Section~\ref{S:Background}, we can easily obtain a relative trisection from a Lefschetz fibration over a disk with bounded fibers. To do this, wrinkle a single Lefschetz singularity, then move the remaining singularities through the indefinite folds coming from the wrinkling as in Figure~\ref{F:pushsing}. Repeating this process until no Lefschetz singularities remain results in a fibration whose singular values are nested triple cuspoids in $D^2.$  A diagramatic version of this process is given in \cite{cgp}.

Theorems~\ref{T:existence} and \ref{T:Gluing} allow us to define a category of trisections \textbf{Tri} whose object are closed $3$--manifolds, either connected or disconnected, equipped with an open book decomposition and whose morphisms are relatively trisected $4$--manifolds up to isotopy and interior stabilizations. Associativity is granted to us by the gluing theorem and each object has an identity since any two relative trisections of $X$ inducing the same open book decomposition(s) on $\partial X$ are stably equivalent.

\begin{question}\label{Q:DoubleTwist} One might hope for a full uniqueness result of relative trisections which strengthens Theorem~\ref{T:relativeuniqueness} by removing the condition on the induced open book decompositions. This would require a collection of operations on relative trisections such that the induced moves on the bounding $3$--manifold allows us to relate any two open book decomposition. In the case that $\partial X$ is a rational homology sphere, relative stabilizations are sufficient since any two open books of a rational homology sphere equivalent under Hopf stabilization. However, when dealing with an arbitrary $3$--manifold, Harer provided us with the \emph{double twist} in \cite{harer} (which was shown to be redundant when dealing with $S^3).$ This double twist would have to be realized as being induced by a move on relative trisections. It is clear that this can be done by taking the connected sum of two trisected $\mathbb{C}P^2$ or $\overline{\mathbb{C}P^2}.$ but we can ask if it is possible to induce a double twist on $\partial X$ while fixing the diffeomorphism type of the $X$.
\end{question}
\end{section}
\newpage

\bibliography{references}

\begin{thebibliography}{1}

\bibitem{alexander}
James~W. Alexander.
\newblock Note on {R}iemann spaces.
\newblock {\em Bull. Amer. Math. Soc.}, 26(8):370--372, 1920.

\bibitem{cgp}
Nickolas~A. Castro, David~T. Gay, and Juanita Pinz\'{o}n-Caicedo.
\newblock Diagrams for relative trisections.
\newblock {\em arXiV:1610.06373}, 2016.

\bibitem{etnyre}
John~B. Etnyre.
\newblock Lectures on open book decompositions and contact structures.
\newblock In {\em Floer homology, gauge theory, and low-dimensional topology},
  volume~5, pages 103--141. Clay Math Proc., 2006.

\bibitem{gay}
David~T. Gay and Robion~C. Kirby.
\newblock Trisecting $4$--manifolds.
\newblock {\em Geom. Top.}, 20:3097--3132, 2016.

\bibitem{gompf}
Robert~E. Gompf and Andr{\'a}s~I. Stipsicz.
\newblock {\em {$4$}-manifolds and {K}irby calculus}, volume~20 of {\em
  Graduate Studies in Mathematics}.
\newblock American Mathematical Society, Providence, RI, 1999.

\bibitem{harer}
John Harer.
\newblock How to construct all fibered knots and links.
\newblock {\em Topology}, 21(3):263--280, 1982.

\bibitem{lekili}
Yanki Lekili.
\newblock Wrinkled fibrations on near-symplectic manifolds.
\newblock {\em Geom. Topol.}, 13(1):277--318, 2009.
\newblock Appendix B by R. {\.I}nan{\c{c}} Baykur.

\bibitem{osb}
Burak Ozbagci and Andr{\'a}s~I. Stipsicz.
\newblock {\em Surgery on contact 3-manifolds and {S}tein surfaces}, volume~13
  of {\em Bolyai Society Mathematical Studies}.
\newblock Springer-Verlag, Berlin; J\'anos Bolyai Mathematical Society,
  Budapest, 2004.

\end{thebibliography}
\bibliographystyle{plain}

\end{document}